\newcommand{\N}{\mathbb N}
\renewcommand{\c}{{\mathcal C}}
\newcommand{\R}{\mathbb R}
\newcommand{\T}{\mathbb T}
\newcommand{\Z}{\mathbb Z}
\renewcommand{\H}{\mathbb H}
\renewcommand{\S}{{\mathbb S}}
\newcommand{\Sec}{{\textsection}}
\newcommand{\Aff}{\operatorname{Aff}^{+}(\mathbb{R})}
\newcommand{\AffR}{\operatorname{Aff}^{+}(\mathbb{R})\times \mathbb{R}}
\DeclareRobustCommand{\SkipTocEntry}[5]{}
\newtheorem{thm}{Theorem}[section]
\newtheorem{prop}[thm]{Proposition}
\newtheorem{defn}[thm]{Definition}
\theoremstyle{definition}
\newtheorem{rem}[thm]{Remark}
\numberwithin{equation}{section}
\author{Katrin F\"assler and Enrico Le Donne}
\address{University of Fribourg, Department of Mathematics, Switzerland}
\email{katrin.faessler@unifr.ch}
\address{University of Jyv\"askyl\"a, Department of Mathematics and Statistics, Finland}
\email{enrico.e.ledonne@jyu.fi}
\thanks{K.F. was partially supported by the Academy of Finland (grant 285159 `\emph{Sub-Riemannian
manifolds from a quasiconformal viewpoint}') and by the  Swiss National Science Foundation (grant 161299 \emph{`Intrinsic rectifiability and mapping theory on the Heisenberg group'}).
\\
E.L.D. was partially supported by the Academy of Finland (grant
288501
`\emph{Geometry of subRiemannian groups}')
and by the European Research Council
 (ERC Starting Grant 713998 GeoMeG `\emph{Geometry of Metric Groups}').
}
\date{\today}
\title[On the classification of 3D Lie groups]{On the quasi-isometric and bi-Lipschitz classification of 3D Riemannian Lie groups}
\begin{document}

\maketitle

\begin{abstract}
This note is concerned with the geometric classification of connected Lie groups of dimension three or less, endowed with left-inva{\-}riant Riemannian metrics. On the one hand, assembling results from the literature, we give a review of the complete classification of such groups up to quasi-isometries and we compare the quasi-isometric classification with the bi-Lipschitz classification. On the other hand, we study the problem wheth{\-}er two quasi-isometrically equivalent Lie groups may be made isometric if equipped with suitable left-invariant Riemannian metrics. We show that this is the case for three-dimensional simply connected groups, but it is not true in general for multiply connected groups. The counterexample also demonstrates that `may be made isometric' is not a transitive relation.
\end{abstract}

\tableofcontents

\newpage
\section{Introduction}

\subsection{List of groups of dimension at most three}

Following the Bianchi classification (see e.g.\ Theorem 1.4 and Theorem 1.5 in \cite[Chapter 7]{gorbatsevich1994lie}), we start by listing the connected real Lie groups of dimension at most three:

\medskip

\textbf{Lie groups of dimension one:}
$\mathbb{R}$,
$\mathbb{T}^1$.

\medskip

\textbf{Lie groups of dimension two:}
 $\mathbb{R}^2$,
$\mathbb{R}\times \mathbb{T}^1$,
$\mathbb{T}^2$,
$\Aff$.

\medskip

\textbf{Lie groups of dimension three:}
$\mathbb{R}^3$,
$\mathbb{R}^2 \times \mathbb{T}^1$,
$\mathbb{R}\times \mathbb{T}^2$,
$\mathbb{T}^3$,
$\mathrm{N}_3(\mathbb{R})$,
$\mathrm{N}_3^{\ast}(\mathbb{R})$,
$\mathrm{SU}(2)$,
$\mathrm{SO}(3)$,
$\widetilde{\mathrm{SL}}(2)$,
$\{\mathrm{PSL}(2)_k\colon\; k\in \N\}$,
$\widetilde{\mathrm{SE}}(2)$,
$\{\mathrm{SE}(2)_k\colon\; k\in \N\}$,
$\mathrm{J}$,
$\{\mathrm{D}_{\lambda}\colon\;0<|\lambda|\leq 1\}$,
$\{\mathrm{C}_{\lambda}\colon\;\lambda >0\}$,
$\AffR$,
$\Aff\times \mathbb{T}^1$.

\medskip

Many of these groups are well known: the $k$-dimensional Euclidean group $\R^k$, the $k$-dimensional torus
$\mathbb{T}^k=(\mathbb{R}/\mathbb{Z})^k$  and direct products of these groups. Nilpotent but non-Abelian groups are the Heisenberg group $\mathrm{N}_3(\mathbb{R})$ and its  quotient $\mathrm{N}_3^{\ast}(\mathbb{R})$ modulo the group of integer points in the center, when $\mathrm{N}_3(\mathbb{R})$ is seen as upper triangular matrix group. Among the solvable but not nilpotent groups there are $\Aff$ (the group of orientation-preserving affine maps of the real line) and products thereof with $\R$ and $\T^1$, moreover
$\widetilde{\mathrm{SE}}(2)$ (the universal cover of the group $\mathrm{SE}(2)$ of orientation preserving isometries of the Euclidean plane) and
$\mathrm{SE}(2)_k$ (the $k$-fold cover of $\mathrm{SE}(2)$). Well-known simple groups are $\mathrm{SU}(2)$ (the special unitary group), $\mathrm{SO}(3)$ (the special orthogonal group), $\widetilde{\mathrm{SL}}(2)$ (the universal cover of     the special linear group), and  $\mathrm{PSL}(2)_k$ (the $k$-fold cover of the projective special linear group $\mathrm{PSL}(2)$).

Apart from  $\widetilde{\mathrm{SL}}(2)$ and $\mathrm{SU}(2)$, all the simply connected groups listed in the previous paragraph
are isomorphic to semidirect products $\mathbb{R}^2 \rtimes_A \mathbb{R}$, where $\mathbb{R}$ acts on $\mathbb{R}^2$ by a matrix  $A\in \mathrm{Mat}(2\times 2,\mathbb{R})$ such that the Lie group product is given by the following expression:
\begin{equation}\label{eq:group_law}
(x,y,z)\ast_A (x',y',z'):=\left(\begin{pmatrix}x\\ y\end{pmatrix}+ e^{zA}\begin{pmatrix}x'\\y'\end{pmatrix},z+z'\right).
\end{equation}
One can find a basis $\{E_1,E_2,E_3\}$ for the Lie algebra of $\mathbb{R}^2 \rtimes_A \mathbb{R}$ whose structure constants are given by
\begin{equation}\label{eq:structure_constants}
A= \begin{pmatrix}c_{13}^1&c_{23}^1\\c_{13}^2&c_{23}^2\end{pmatrix},
\end{equation}
and $c_{ij}^k = 0$ for all other cases where $i\leq j$ and $k\in \{1,2,3\}$, see for instance \cite[\Sec 2.2]{MR2963596}.

The connected $3$-dimensional Lie groups which we have not yet introduced are all solvable and also of the form
$\mathbb{R}^2 \rtimes_A \mathbb{R}$.
For $A=\begin{psmallmatrix}1&1\\0&1\end{psmallmatrix}$ (respectively $\begin{psmallmatrix}1&0\\0&\lambda\end{psmallmatrix}$, respectively $\begin{psmallmatrix}\lambda & 1\\-1& \lambda\end{psmallmatrix}$), we obtain $J$ (respectively $D_{\lambda}$, respectively $C_{\lambda}$).

     \subsection{Classification results}\label{s:relations}

     \textsc{Standing assumption.} \emph{All distances considered are left-invariant Riemannian distances.}
 \medskip

 A (not necessarily continuous) map $\Psi:(X,d) \to (X',d')$ between metric spaces is a \emph{quasi-isometry} if there exist constants $0<C<\infty$ and $1\leq L<\infty$ such that
 \begin{enumerate}
 \item[(i)] $L^{-1}d(x,y)-C\leq d'(\Psi(x),\Psi(y))\leq L d(x,y) + C$ for all $x,y\in X$,
 \item[(ii)] for all $x'\in X'$ there is $x\in  X $ such that $d'(\Psi(x), {x'})\leq C$.
 \end{enumerate}
 If (i) and (ii) hold with $C=0$, the map $\Psi$ is said to be \emph{bi-Lipschitz}, and if moreover $L=1$, then $\Psi$ is an \emph{isometry}. If  $X$ and $X'$ are manifolds and if the distances $d$ and $d'$ are induced by Riemannian metrics $g$ and $g$, respectively, then according to a well-known result by S.\ B.\ Myers and N.\ E.\ Steenrod \cite{MR1503467}, the map $\Psi$ is an isometry exactly if it is a diffeomorphism such that $\Psi^{\ast} g' =g$, see also \cite[Theorem 5.6.15]{MR3469435}.
Since any two left-invariant Riemannian distances on a Lie group are bi-Lipschitz equivalent, we can discuss the quasi-isometric and bi-Lipschitz classification of such groups without specifying a metric. On the other hand, the existence of \emph{isometries} between two groups depends on the choice of metrics. As we are interested in the geometric classification of groups, rather than the classification of groups endowed with a specific metric, we study the following property.

\begin{defn}\label{d:MayBeMadeIsometric} We say that two connected Lie groups $G$ and $H$ \emph{may be made isometric} if there exist left-invariant Riemannian distances $d_G$ and $d_H$ on $G$ and $H$, respectively, such that $(G,d_G)$ and $(H,d_H)$ are isometric.
\end{defn}

Definition \ref{d:MayBeMadeIsometric}  goes back to \cite[\Sec 1.2]{2017arXiv170509648C}, but differs slightly from the original definition, which was formulated for arbitrary topological groups and which required only the existence of  left-invariant distances that induce the manifold topology. By  \cite[Proposition 2.4]{KLD} isometries between connected Lie groups endowed with such distances are actually isometries for some left-invariant \emph{Riemannian} distances, and hence Definition \ref{d:MayBeMadeIsometric} agrees with the definition of \cite{2017arXiv170509648C} in the case of connected Lie groups.

It is easy to show that two Lie groups $G$ and $H$ may be made isometric if and only if there exists a Riemannian manifold $M$ on which both $G$ and $H$ act simply transitively by isometries, see Proposition \ref{p:common_model}.

If $X$ is a fixed model space with a standard distance $d_X$, for instance Euclidean space or the hyperbolic plane, we will also say that ``$G$ may be made isometric to $X$'' if there exists a  left-invariant Riemannian distance $d_G$ on $G$ such that $(G,d_G)$ and $(X,d_X)$ are isometric.

     In Section \ref{s:relations}, we discuss relations between connected Lie groups of dimension at most three in descending order of strength, that is, we list pairs consisting of groups that
     \begin{enumerate}
     \item[(a)] may be made isometric (Proposition \ref{p:I})
     \item[(b)] are bi-Lipschitz  (Proposition \ref{p:BL})
     \item[(c)] are quasi-isometrically homeomorphic (Proposition \ref{p:QIH})
     \item[(d)] are quasi-isometric (Proposition \ref{p:QI}).
     \end{enumerate}
     To conclude the quasi-isometric classification given in Theorem \ref{t:QIClass} below,
     we show that the pairs not appearing in the list (a)--(d) consist of groups that are not quasi-isometrically equivalent.

     Classification problems for Lie groups have a long history that dates back to L.\ Bianchi's isomorphic classification of $3$-dimensional Lie algebras \cite{bianchi1897sugli}. This note is concerned with the geometric classification of Lie groups that are additionally equipped with left-invariant Riemannian distances. Gromov \cite{Gro} in his address to the ICM in 1983 promoted a program to study finitely generated groups with word metrics up to quasi-isometries. This classification problem is related to the quasi-isometric classification of Riemannian manifolds, as the fundamental group of a compact connected Riemannian manifold $M$ is a finitely generated group quasi-isometrically equivalent to the universal Riemannian cover $\widetilde{M}$ according to the \v{S}varc-Milnor lemma.

     In the first part of this note, we recall the quasi-isometric classification of connected  Lie groups up to dimension three.
     This is the work of several authors who have studied various aspects of the quasi-isometric classification, for instance for solvable groups of a specific form, or under curvature constraints. We list some of these results: Y.\ Guivarc'h and J.\ W.\ Jenkins' characterization of connected Lie groups with polynomial growth  \cite{MR0369608,MR0316625}, E.\ Heintze's work on solvable Lie groups and homogeneous manifolds of negative curvature \cite{MR0353210}, J.\ Milnor's study of the curvature properties of left-invariant Riemannian metrics on Lie groups \cite{MILNOR1976293}, the study of $3$-dimensional model geometries and Dehn functions in the work of Epstein et al.\ \cite{epstein1992word}  on automatic group,  P.\ Pansu's work on $L^p$ cohomology \cite{Pansu,MR2390047}, Y.\ de Cornulier's computation of the covering dimension of asymptotic cones of connected Lie groups \cite{MR2399134},  the study of quasi-isometries of certain solvable Lie groups by A.\ Eskin, D.\ Fisher, K.\ Whyte \cite{MR2925383}, and X.\ Xie's quasi-isometric classification of negatively curved solvable Lie groups of the form $\R^n \rtimes \R$ \cite{MR3180486}. Depending on the case to be treated, different tools are used in the classification problem, such as  volume growth, Dehn functions, curvature and asymptotic cones of Riemannian manifolds.

\begin{thm}[Various authors]\label{t:QIClass}
All connected real Lie groups of dimension at most three can be classified up to quasi-isometries according to the following table:
\begin{center}
\begin{minipage}[b]{\linewidth}
\centering
\begin{tabular}{|l|l|}
  \hline
  Class & Representatives \\\hline\hline
  $(1)$ & $\mathbb{T}^1$, $\mathbb{T}^2$, $\mathbb{T}^3$, $\mathrm{SU}(2)$, $\mathrm{SO}(3)$ \\
  \hline
  $(2)$ & $\mathbb{R}$, $\mathbb{R}\times \mathbb{T}^1$, $\mathbb{R}\times \mathbb{T}^2$\\
  \hline
  $(3)$& $\mathbb{R}^2$, $\mathbb{R}^2 \times \mathbb{T}^1$, $\mathrm{N}_3^{\ast}(\mathbb{R})$, $\{\mathrm{SE}(2)_k\colon  k\in\N\}$\\
  \hline
  $(4)$& $\mathbb{R}^3$, $\widetilde{\mathrm{SE}}(2)$\\
  \hline
  $(5)$ & $\mathrm{N}_3(\mathbb{R})$\\
  \hline
  $(6)$ & $\widetilde{\mathrm{SL}}(2)$, $\AffR$ \\
  \hline
  $(7_{\lambda})$ \textup{for} $\lambda \in [-1,0)$& $\mathrm{D}_{\lambda}$\\
  \hline
  $(8)$ & $\Aff$, $\Aff\times \mathbb{T}^1$, $\{\mathrm{PSL}(2)_k\colon k\in \N\}$\\
  \hline
  $(9)$& $\mathrm{J}$\\
  \hline
  $(10)$& $\mathrm{D}_1$, $\{\mathrm{C}_{\lambda}\colon \lambda >0\}$\\
  \hline
  $(11_{\lambda})$ \textup{for} $\lambda \in (0,1)$ & $\mathrm{D}_{\lambda}$\\
  \hline
\end{tabular}
\label{t:QIclass}
\end{minipage}
\end{center}
\end{thm}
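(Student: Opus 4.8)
The plan is to separate the argument into a \emph{positive} direction, in which the representatives listed together in one row are shown to be mutually quasi-isometric, and a \emph{negative} direction, in which representatives of two different rows are shown never to be quasi-isometric. The positive direction requires no new work: since being isometric, bi-Lipschitz, or quasi-isometrically homeomorphic each implies being quasi-isometric, the within-row equivalences are precisely the pairs already collected in Propositions \ref{p:I}, \ref{p:BL}, \ref{p:QIH} and \ref{p:QI}. The substance of the proof is therefore the negative direction, and I would organise it as a descending chain of quasi-isometry invariants, peeling off classes until only the deep rigidity theorems remain.

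First I would treat the groups of polynomial growth. By the theorem of Guivarc'h and Jenkins \cite{MR0369608,MR0316625} a connected Lie group has polynomial growth exactly when it is of type (R), and these are precisely the groups in classes $(1)$--$(5)$; every group in $(6)$--$(11)$ grows exponentially. As polynomial-versus-exponential growth, and the degree of polynomial growth, are quasi-isometry invariants (Gromov, Pansu), and the degrees are $0,1,2,3,4$ for $(1),(2),(3),(4),(5)$ respectively, these five classes are pairwise distinct and are separated from $(6)$--$(11)$; class $(1)$ is further singled out as the compact groups (each quasi-isometric to a point). The only care needed is the growth bookkeeping for the non-abelian representatives: $\mathrm{N}_3^{\ast}(\R)$ and the $\mathrm{SE}(2)_k$ have a compact central, respectively fibre, direction and hence grow like $\R^2$, whereas $\widetilde{\mathrm{SE}}(2)$ has that direction unwound and grows like $\R^3$, and $\mathrm{N}_3(\R)$ has degree $4$.

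It remains to distinguish the exponentially growing classes $(6)$--$(11)$, which (apart from $\widetilde{\mathrm{SL}}(2)$, quasi-isometric to $\AffR\cong\R^2\rtimes_A\R$ with $A=\begin{psmallmatrix}1&0\\0&0\end{psmallmatrix}$) are all of the form $\R^2\rtimes_A\R$. Here I would next invoke Gromov hyperbolicity, a quasi-isometry invariant. The groups in $(8)$, $(9)$, $(10)$ and $(11_\lambda)$ are Heintze groups --- the matrix $A$ has all eigenvalues with positive real part, so by \cite{MR0353210} they admit left-invariant metrics of negative curvature --- and are thus hyperbolic, while $(6)$ is quasi-isometric to $\mathbb{H}^2\times\R$ and $(7_\lambda)$ is of $\Sol$-type (one expanding and one contracting direction), both containing quasi-flats and hence non-hyperbolic. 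Within the hyperbolic classes the topological dimension of the Gromov boundary, again a quasi-isometry invariant, equals $1$ for $(8)$ (boundary $S^1$, these groups being quasi-isometric to $\mathbb{H}^2$) and $2$ for the three-dimensional Heintze groups $(9)$, $(10)$, $(11_\lambda)$ (boundary $S^2$), which separates $(8)$ from the latter.

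The hard part will be the two finest separations, where topological invariants are exhausted and one must appeal to the deep classification theorems. For the non-hyperbolic groups, distinguishing $(6)$ from the $\Sol$-type family and, within $(7_\lambda)$, distinguishing distinct values of $\lambda$, follows from the Eskin--Fisher--Whyte and Xie quasi-isometric classification of solvable groups $\R^2\rtimes_A\R$ by the eigenvalue data of $A$ \cite{MR2925383,MR3180486}; de Cornulier's covering dimension of asymptotic cones \cite{MR2399134} provides a further coarse invariant in this range. For the three-dimensional Heintze groups I would use the conformal dimension of the boundary (Pansu, \cite{Pansu,MR2390047}): for $\mathrm{D}_\lambda$ with $\lambda\in(0,1)$ it equals $1+1/\lambda$, strictly monotone in $\lambda$ and greater than $2$, which separates the classes $(11_\lambda)$ from one another and from $\mathbb{H}^3$, whose boundary has conformal dimension $2$ (the value also attained by every $\mathrm{C}_\lambda$, consistent with class $(10)$). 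The genuinely delicate point is $(9)$ versus $(10)$: the Jordan block $A=\begin{psmallmatrix}1&1\\0&1\end{psmallmatrix}$ of $\mathrm{J}$ has the same doubled eigenvalue as $\mathrm{D}_1$, so its boundary again has conformal dimension $2$ and this invariant does not see the difference. Separating $\mathrm{J}$ from $\mathbb{H}^3$ therefore requires an invariant sensitive to the non-diagonalizability --- Pansu's $L^p$-cohomology, or the fact that the conformal dimension is attained for $\mathbb{H}^3$ but not for $\mathrm{J}$, together with the full quasi-isometric classification of three-dimensional Heintze groups --- and I expect this to be the main obstacle in completing the table.
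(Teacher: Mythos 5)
Your overall architecture coincides with the paper's: within-row equivalences from Propositions \ref{p:I}--\ref{p:QI}, then a descending chain of invariants (type R and growth degree for $(1)$--$(5)$, Gromov hyperbolicity to split $(8)$--$(11_\lambda)$ from $(6)$--$(7_\lambda)$, boundary $\S^1$ versus $\S^2$ to isolate $(8)$, and the Pansu/Eskin--Fisher--Whyte/Xie classification theorems for the rest). However, three specific points need repair. First, your stated reason that the groups $D_\lambda$, $\lambda\in[-1,0)$, are not hyperbolic --- that they ``contain quasi-flats'' --- is false: the asymptotic cone of $D_\lambda$ has covering dimension $1$ by de Cornulier's theorem \cite{MR2399134} (this is exactly the invariant you use two sentences later to separate $(6)$ from $(7_\lambda)$), so these groups admit no quasi-isometrically embedded copy of $\R^2$. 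Non-hyperbolicity must instead be proved as in Proposition \ref{p:L(3,2,-1<eta<0)}, via pairs of quasi-geodesics with the same endpoints that diverge from each other (or via the exponential Dehn function).

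Second, within the family $(7_\lambda)$ you cannot simply cite the Eskin--Fisher--Whyte classification for every pair: their rigidity theorem for $\Sol(m,n)$ is stated for $m>n>0$, i.e.\ for $D_\lambda$ with $-1<\lambda<0$, and does not cover the unimodular case $D_{-1}=\Sol$. The paper separates $D_{-1}$ from $D_{\lambda}$, $-1<\lambda<0$, by a different mechanism: $D_{-1}$ admits a cocompact lattice $\Z\ltimes\Z^2$, whereas by \cite[Theorem 1.2]{MR2925383} no finitely generated group is quasi-isometric to $D_\lambda$ for $-1<\lambda<0$. Third, the separation of $(9)$ from $(10)$, which you correctly identify as the delicate point and leave open, is closed in the paper simply by \cite[Corollary 1.3]{MR3180486}: two groups $\R^2\rtimes_A\R$ and $\R^2\rtimes_B\R$ with all eigenvalues of positive real part are quasi-isometric if and only if $A$ and $sB$ have the same real-part Jordan form for some $s>0$, which distinguishes the Jordan block $J$ from the diagonalizable $D_1$ and simultaneously handles $(11_\lambda)$ versus $(11_{\lambda'})$ without any appeal to conformal dimension. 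Since you already invoke Xie's theorem elsewhere, this last gap is one of assembly rather than of substance; the first two are genuine errors of argument.
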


We stress that the classes $(7_{\lambda})$ are distinct for different values of $\lambda$, and the same holds for $(11_{\lambda})$. In Section \ref{s:conclusion}
we will explain how the above mentioned results by various authors can be combined to prove Theorem \ref{t:QIClass}.

\medskip

     According to Theorem \ref{t:QIClass}, two simply connected $3$-dimensional Lie groups $G$ and $H$ (that are not isomorphic) are quasi-isometric to each other if and only if one of the following holds:
     \begin{enumerate}
     \item\label{i:isom} $G,H\in \{\R^3,\widetilde{\mathrm{SE}}(2)\}$
     \item\label{ii:isom} $G,H \in \{\widetilde{\mathrm{SL}}(2),\AffR\}$
     \item\label{iii:isom} $G,H \in \{D_1\}\cup \{C_{\lambda}:\; \lambda >0\}$.
     \end{enumerate}
     In Proposition \ref{p:I} we shall show that in all these cases, the two groups $G$ and $H$ may in fact be made isometric. By Proposition \ref{p:common_model}, this means that there exists a Riemannian manifold $M$ on which both $G$ and $H$ act simply transitively by isometries. In fact, $M$ may be taken equal to a Riemannian manifold that corresponds to one of the eight $3$-dimensional model geometries by Thurston \cite{thurston1997three}:
     \begin{itemize}
     \item the Euclidean geometry in \eqref{i:isom},
     \item the geometry of $\widetilde{\mathrm{SL}}(2)$ in \eqref{ii:isom},
     \item the hyperbolic geometry in \eqref{iii:isom},
     \end{itemize}
      see the discussion in Section \ref{s:I}, and in particular Remark \ref{r:model_SL(2)} for \eqref{ii:isom}. Thus we obtain the following result:

     \begin{thm}\label{t:simplyConn}
     If two non-isomorphic simply connected $3$-dimensional Lie groups are quasi-isometric, then they may be made isometric to one of the eight Thurston geometries.
     \end{thm}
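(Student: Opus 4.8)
The plan is to derive Theorem~\ref{t:simplyConn} from the quasi-isometric classification in Theorem~\ref{t:QIClass} together with the reformulation of ``may be made isometric'' in Proposition~\ref{p:common_model}. First I would read off from Theorem~\ref{t:QIClass} that two non-isomorphic simply connected $3$-dimensional Lie groups are quasi-isometric in exactly the three cases \ref{i:isom}, \ref{ii:isom}, \ref{iii:isom} listed before the statement. It then suffices, in each case, to equip the two groups with left-invariant Riemannian metrics under which they become isometric to the indicated Thurston geometry; by Proposition~\ref{p:common_model} this amounts to exhibiting a single model manifold on which both groups act simply transitively by isometries. The metric constructions themselves are the content of Proposition~\ref{p:I}, so the heart of the argument is the explicit identification of the common models.

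For case \ref{i:isom}, the group $\R^3$ with its flat metric is Euclidean space $\mathbb{E}^{3}$, so I only need a flat left-invariant metric on $\widetilde{\mathrm{SE}}(2)$. Writing $\widetilde{\mathrm{SE}}(2)=\R^2\rtimes_A\R$ with $A=\begin{psmallmatrix}0&-1\\1&0\end{psmallmatrix}$, the $\R$-factor acts on the abelian ideal $\R^2$ by a skew-symmetric matrix, which is precisely Milnor's criterion~\cite{MILNOR1976293} for the existence of a flat left-invariant metric; a simply connected complete flat $3$-manifold is isometric to $\mathbb{E}^{3}$. For case \ref{iii:isom}, each group has the form $\R^2\rtimes_A\R$ with $A=\begin{psmallmatrix}1&0\\0&1\end{psmallmatrix}$ (for $D_1$) or $A=\begin{psmallmatrix}\lambda&1\\-1&\lambda\end{psmallmatrix}$ (for $C_\lambda$). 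In both cases the symmetric part of $A$ equals the scalar matrix $\lambda I$ (with $\lambda=1$ for $D_1$), and a curvature computation in the spirit of Heintze~\cite{MR0353210} shows that the natural left-invariant metric then has constant negative sectional curvature; after rescaling, these groups are isometric to $\H^3$. By contrast, for $D_\lambda$ with $\lambda\neq 1$ the symmetric part $\begin{psmallmatrix}1&0\\0&\lambda\end{psmallmatrix}$ is not scalar, which is consistent with $D_\lambda$ forming its own quasi-isometry class $(11_\lambda)$.

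The main obstacle is case \ref{ii:isom}, where $\widetilde{\mathrm{SL}}(2)$ is the model essentially by definition, but $\AffR$ must be realized as a simply transitive group of isometries of that same geometry. The idea is to work inside the $4$-dimensional isometry group of the model, whose defining metric is invariant both under left translations by $\widetilde{\mathrm{SL}}(2,\R)$ and under right translations by the fiber subgroup $\widetilde{\mathrm{SO}}(2)\cong\R$. Taking the connected Borel subgroup $B\cong\Aff$ of $\mathrm{SL}(2,\R)$, which by the Iwasawa decomposition acts simply transitively on the base $\H^2=\widetilde{\mathrm{SL}}(2,\R)/\widetilde{\mathrm{SO}}(2)$, its lift $\widetilde{B}$ acts by left translation with a single orbit transverse to the fibers, while right translation by $\widetilde{\mathrm{SO}}(2)$ sweeps out the fibers. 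Since left and right translations commute and $\widetilde{B}\cap\widetilde{\mathrm{SO}}(2)=\{e\}$, the product group $\widetilde{B}\times\widetilde{\mathrm{SO}}(2)\cong\AffR$ acts simply transitively by isometries, which is exactly what Proposition~\ref{p:common_model} requires; this is the substance of Remark~\ref{r:model_SL(2)}. I expect the delicate points here to be verifying transversality of the Borel orbits to the fibers and the right-$\widetilde{\mathrm{SO}}(2)$-invariance of the model metric, after which the three cases combine to give the theorem.
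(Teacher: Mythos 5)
Your proposal is correct and follows essentially the same route as the paper: reduce via Theorem \ref{t:QIClass} to the three cases \eqref{i:isom}--\eqref{iii:isom}, then exhibit the Euclidean, $\widetilde{\mathrm{SL}}(2)$, and hyperbolic common models exactly as in Proposition \ref{p:I}, Proposition \ref{p:common_model}, and Remark \ref{r:model_SL(2)}. The only cosmetic differences are that the paper verifies case \eqref{i:isom} by checking directly that the Euclidean metric on $\R^3$ is left-invariant for the $\widetilde{\mathrm{SE}}(2)$ product (rather than invoking Milnor's flatness criterion), and it phrases the $\AffR$-action on the $\widetilde{\mathrm{SL}}(2)$ model via translation of the fibers of the line bundle over $\H^2$ rather than via right translation by $\widetilde{\mathrm{SO}}(2)$, which amounts to the same thing.
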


  In Proposition \ref{p:BL} we shall show that without the assumption ``simply connected'', it is not true in general that two connected, quasi-isometric Lie groups may be made isometric. Moreover, since the groups $\mathrm{PSL}(2)_k$, for different values of $k\in \N$, may all be made isometric to  $\Aff \times \T^1$, but cannot be made isometric to each other, we have  the following consequence.

     \begin{prop} The relation ``may be made isometric'' is not transitive.
     \end{prop}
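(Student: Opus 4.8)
The statement follows immediately from the two facts recorded just above it, and the plan is simply to package them into an explicit three-term chain. Concretely, I would work with the groups $\mathrm{PSL}(2)_1$, $\Aff\times\T^1$, and $\mathrm{PSL}(2)_2$.

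First I would observe that ``may be made isometric'' is a symmetric relation: by Definition \ref{d:MayBeMadeIsometric} it asserts the existence of an isometry between $(G,d_G)$ and $(H,d_H)$, and the inverse of an isometry is again an isometry, so the roles of $G$ and $H$ may be exchanged. (Alternatively, symmetry is transparent from the common-model reformulation of Proposition \ref{p:common_model}.)

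Next I would invoke Proposition \ref{p:BL}, which supplies both ingredients: on the one hand each cover $\mathrm{PSL}(2)_k$ may be made isometric to $\Aff\times\T^1$, so in particular $\mathrm{PSL}(2)_1$ may be made isometric to $\Aff\times\T^1$ and, using symmetry, $\Aff\times\T^1$ may be made isometric to $\mathrm{PSL}(2)_2$; on the other hand $\mathrm{PSL}(2)_1$ and $\mathrm{PSL}(2)_2$ may not be made isometric. Were the relation transitive, the first two instances would force $\mathrm{PSL}(2)_1$ and $\mathrm{PSL}(2)_2$ to be made isometric, contradicting the third. Hence transitivity fails.

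All the genuine content is thus contained in Proposition \ref{p:BL}, and I expect its harder half --- the mutual non-isometry of $\mathrm{PSL}(2)_1$ and $\mathrm{PSL}(2)_2$ --- to be the real obstacle. The difficulty is that this cannot be detected by coarse invariants: as smooth manifolds all finite covers of $\mathrm{PSL}(2,\R)$ are diffeomorphic to $S^1\times\R^2$, and they are mutually quasi-isometric, sitting in the single class $(8)$ of Theorem \ref{t:QIClass}. The obstruction must therefore come from a strictly metric, isometry-invariant quantity attached to a left-invariant Riemannian metric --- presumably controlling the deck action of $\pi_1\cong\Z$ and the normalized length of the central circle direction relative to the rest of the geometry --- that takes different values for different $k$. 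Once that non-isometry is available, the symmetric-chain argument above closes the proof.
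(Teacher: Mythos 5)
Your proof is correct and follows exactly the paper's argument: the paper likewise derives non-transitivity from the chain $\mathrm{PSL}(2)_k \sim \Aff\times\T^1 \sim \mathrm{PSL}(2)_{k'}$ together with the failure of $\mathrm{PSL}(2)_k$ and $\mathrm{PSL}(2)_{k'}$ to be made isometric, all of which is supplied by Propositions \ref{p:PSL(2)_kAffS1} and \ref{p:BL}. (For the record, the non-isometry in Proposition \ref{p:BL} is obtained not from a metric length invariant as you speculate, but from Gordon's theorem on isometry groups, which forces $\mathrm{PSL}(2)_k = A_{nc}$ for the full isometry group $A$ and hence reduces the question to group isomorphism.)
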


\noindent\textbf{Acknowledgements.} We are grateful to Yves de Cornulier for numerous helpful comments, additions, and suggestions.
In particular we thank him for encouraging us to discuss geometric models, for bringing the reference \cite{MR617247} to our attention and for explaining how it can be used to show that the groups $\mathrm{PSL}(2)_k$ for different values of $k$ cannot be made isometric. We also wish to thank Bruce Kleiner and Ville Kivioja for useful discussions.

  \section{Relations between groups}\label{s:relations}

     \subsection{Groups that may be made isometric}\label{s:I}

     We begin the section with a basic observation about Lie groups that may be made isometric and carry on with a list of $3$-dimensional Lie groups that may be made isometric.

\begin{prop}\label{p:common_model}
Two Lie groups $G$ and $H$ may be made isometric if and only if there exists a Riemannian manifold $M$ on which both $G$ and $H$ act simply transitively by isometries.
\end{prop}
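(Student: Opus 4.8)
The plan is to prove the two implications separately, in both cases using orbit maps to transport Riemannian structures between the groups and the model manifold $M$. The regularity input throughout is the Myers--Steenrod theorem quoted above, which guarantees that an isometry of Riemannian manifolds is a diffeomorphism, together with the fact that the isometry group $\Iso(M)$ of a Riemannian manifold is a Lie group acting smoothly on $M$.

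For the implication ``may be made isometric $\Rightarrow$ common model'', I would assume $d_G$ and $d_H$ are left-invariant Riemannian distances admitting an isometry $\phi\colon (G,d_G)\to (H,d_H)$, and take $M=(G,d_G)$ as the model. The left multiplication action of $G$ on itself is simply transitive and, by left-invariance of $d_G$, acts by isometries. To let $H$ act, I transport its left multiplication action through $\phi$: for $h\in H$ set $h\cdot x:=\phi^{-1}(h\,\phi(x))$ for $x\in M$, where $h\,\phi(x)$ denotes left translation in $H$. Since $\phi$ is an isometry (hence a diffeomorphism) and left translations of $H$ are isometries of $(H,d_H)$, each such map is an isometry of $M$; a short check shows this is indeed a left action, and conjugating a simply transitive action by the bijection $\phi$ preserves simple transitivity. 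This produces the desired $M$.

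For the converse, suppose $M$ is a Riemannian manifold on which both $G$ and $H$ act simply transitively by isometries. Fixing a base point $o\in M$, I consider the orbit maps $\alpha_G\colon G\to M$, $g\mapsto g\cdot o$, and $\alpha_H\colon H\to M$, $h\mapsto h\cdot o$. Simple transitivity makes these bijections, and---this is the point to be justified carefully---they are in fact diffeomorphisms. I would then pull back the Riemannian metric of $M$ along $\alpha_G$ to obtain a Riemannian metric on $G$. The identity $\alpha_G(g_0 g)=g_0\cdot\alpha_G(g)$ shows that left translation $L_{g_0}$ on $G$ corresponds under $\alpha_G$ to the action of $g_0$ on $M$, which is an isometry; hence the pulled-back metric is left-invariant, and $\alpha_G$ becomes an isometry onto $M$ for the induced distance $d_G$. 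Repeating with $\alpha_H$ yields a left-invariant $d_H$, and $\alpha_H^{-1}\circ\alpha_G\colon (G,d_G)\to(H,d_H)$ is then an isometry, so $G$ and $H$ may be made isometric.

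The place I expect to be the real obstacle is the regularity of the orbit maps in the converse direction: one must know that a simply transitive action of a Lie group by isometries gives a \emph{smooth} orbit map that is a diffeomorphism, not merely a continuous bijection. I would obtain this from the homogeneous-space picture: the action factors through a homomorphism $G\to\Iso(M)$ that is injective because the stabilizer of $o$ is trivial by simple transitivity; since $\Iso(M)$ is a Lie group acting smoothly and transitively on $M$, the manifold $M$ is identified with the smooth homogeneous space $\Iso(M)/\Iso(M)_o$, and the orbit map $\alpha_G$ is a smooth bijection with trivial stabilizer, hence a diffeomorphism. Once this is in place, the pullback of the metric is a genuine smooth left-invariant Riemannian metric and the remaining verifications are routine.
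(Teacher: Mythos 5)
Your proof is correct and follows essentially the same route as the paper's: in one direction you transport the left-multiplication action of one group through the given isometry (the paper takes $M=H$ where you take $M=G$, an immaterial choice), and in the other you use orbit maps based at a fixed point of $M$ to induce left-invariant structures on $G$ and $H$. The only substantive difference is that you pull back the Riemannian \emph{metric} and justify smoothness of the orbit map via the homogeneous-space picture, whereas the paper pulls back the \emph{distance} directly and leaves implicit the fact that the resulting left-invariant distance is Riemannian; your extra care on that point is a welcome completion rather than a detour.
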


\begin{proof}
Assume first that $G$ and $H$ possess Riemannian distances $d_G$ and $d_H$, respectively, for which there exists an isometry $\Psi: (G,d_G) \to (H,d_H)$. Take $M=H$ equipped with the Riemannian metric $g$ that induces $d_H$. Clearly, $H$ acts on $M$ simply transitively by isometries, and the same is true for $G$ with the action given by
\begin{displaymath}
G\times M \to M,\quad (g,m) \mapsto \Psi \circ L_g \circ \Psi^{-1}(m),
\end{displaymath}
where $L_g$ denotes left translation by $g\in G$.

Conversely, assume that $G$ and $H$ act simply transitively on a manifold $M$ with Riemannian distance $d$. Fix $x_0 \in M$ and define
\begin{displaymath}
d_G(g,g'):= d(g.x_0,g'.x_0),\quad g,g'\in G
\end{displaymath}
and
\begin{displaymath}
d_H(h,h'):= d(h.x_0,h'.x_0),\quad h,h'\in H.
\end{displaymath}
Since by assumption the actions of $G$ and $H$ on $M$ are free, the above definition yields distance functions on $G$ and $H$. From the compatibility of group actions and the fact that $G$ and $H$ act by isometries, we easily deduce that $d_G$ and $d_H$ are left-invariant. For instance, for $G$, we find for
\begin{align*}
d_G(g_0 g, g_0 g')&= d(g_0.(gx_0),g_0 . (g'x_0))
= d(g.x_0,g'.x_0)
= d_G(g,g').
\end{align*}
Since the given actions by $G$ and $H$ on $M$ are also transitive, for every $g\in G$ we find $h(g)\in H$ such that $g.x_0 = h(g).x_0$. This defines a map $(G,d_G)\to (H,d_H)$, $g\mapsto h(g)$, which is easily seen to be an isometry.
\end{proof}

     \begin{prop}\label{p:I}
     Each of the following pairs consists of groups that may be made isometric:
     \begin{enumerate}
     \item\label{I1} $(\R^3,\widetilde{\mathrm{SE}}(2))$
     \item\label{I2} $(\R^2 \times \T^1,\mathrm{SE}(2)_k)$ for every $k\in \N$
     \item\label{I3} $(\mathrm{SE}(2)_k,\mathrm{SE}(2)_{k'})$ for  all $k,k'\in \N$
     \item\label{I4} $(\widetilde{\mathrm{SL}}(2),\AffR)$
     \item\label{I5} $(\Aff\times \T^1,\mathrm{PSL}(2)_k)$ for every $k\in \N$
     \item\label{I6} $(D_1,C_{\lambda})$ for every $\lambda>0$
     \item\label{I7} $(C_{\lambda},C_{\lambda'})$ for all $\lambda,\lambda'>0$.
     \end{enumerate}
     \end{prop}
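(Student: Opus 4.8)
The plan is to verify all seven pairs uniformly through Proposition \ref{p:common_model}: for each listed pair it suffices to exhibit a single Riemannian $3$-manifold $M$ carrying simply transitive isometric actions of both groups. The pairs organize into three families according to the model geometry of $M$ — Euclidean $\R^3$ for \eqref{I1}--\eqref{I3}, the geometry of $\widetilde{\mathrm{SL}}(2)$ for \eqref{I4}--\eqref{I5}, and hyperbolic $\H^3$ for \eqref{I6}--\eqref{I7} — mirroring the quasi-isometry classes $(4)$, $(6)$, $(10)$ of Theorem \ref{t:QIClass}.

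For the Euclidean family I would realize $\widetilde{\mathrm{SE}}(2)=\R^2\rtimes\R$ as the group of Euclidean motions of $\R^3$ generated by the horizontal translations together with the vertical screw motions (rotation about, and simultaneous translation along, the $z$-axis). This is precisely the flat left-invariant metric on $\widetilde{\mathrm{SE}}(2)$ (cf.\ \cite{MILNOR1976293}), and both this group and the translation group $\R^3$ act simply transitively by isometries on Euclidean $\R^3$, giving \eqref{I1}. The center of $\widetilde{\mathrm{SE}}(2)$ consists of the full $2\pi$-screws, which act as a discrete group of vertical translations; quotienting $\R^3$ by the subgroup $k\Z$ of the center produces a flat cylinder $\R^2\times S^1$ on which $\mathrm{SE}(2)_k$ and $\R^2\times\T^1$ both act simply transitively by isometries, giving \eqref{I2}. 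Since rescaling the flat metric prescribes the circumference of the $S^1$-factor, the same cylinder serves as a common model for $\mathrm{SE}(2)_k$ and $\mathrm{SE}(2)_{k'}$, yielding \eqref{I3}.

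For the hyperbolic family I would work inside $\mathrm{Isom}^+(\H^3)=\mathrm{PSL}(2,\C)$ and use its solvable subgroup $\C\rtimes\C^\ast$ acting on the upper half-space $\{(\zeta,t):\zeta\in\C,\,t>0\}$, where $\C$ translates $\zeta$ horizontally and $w\in\C^\ast$ acts by $(\zeta,t)\mapsto(w\zeta,|w|t)$. For $\lambda>0$ the one-parameter subgroup $L_\lambda=\{e^{s(\lambda+i)}:s\in\R\}\subset\C^\ast$ is transverse to the stabilizer circle $S^1\subset\C^\ast$, so the $3$-dimensional group $\C\rtimes L_\lambda$ acts simply transitively on $\H^3$ by isometries and is isomorphic to $C_\lambda$ (after possibly reversing the orientation of $\C$); the real-dilation subgroup $\R_{>0}\subset\C^\ast$ plays the same role for $D_1$. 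Hence $\H^3$ is a common model for any two of $D_1,C_\lambda,C_{\lambda'}$, giving \eqref{I6} and \eqref{I7}.

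The main obstacle is the $\widetilde{\mathrm{SL}}(2)$-family, where the model is not a metric product. Here I would invoke the description of the geometry of $\widetilde{\mathrm{SL}}(2)$ as an $\R$-fibration over $\H^2$ (Remark \ref{r:model_SL(2)}), whose isometry group has a $4$-dimensional identity component containing both the left translations of $\widetilde{\mathrm{SL}}(2)$ and the fiber translations $\R$. Lifting the Borel subgroup $\Aff\subset\widetilde{\mathrm{SL}}(2)$, which acts simply transitively on the base $\H^2$, and combining it with the central fiber translations exhibits $\AffR$ as a simply transitive group of isometries, giving \eqref{I4}. Quotienting by the index-$k$ central subgroup (fiber translations by integer multiples of the period) turns the fiber into a circle, realizing $\mathrm{PSL}(2)_k$ and $\Aff\times\T^1$ as simply transitive isometry groups of the resulting Seifert-fibered manifold, and rescaling the fiber matches the circumference for each $k$, giving \eqref{I5}. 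The delicate point throughout \eqref{I4}--\eqref{I5} is to check that the chosen $3$-dimensional subgroups of the $4$-dimensional isometry group are the asserted abstract groups and act freely — precisely where the explicit model of Remark \ref{r:model_SL(2)} is needed.
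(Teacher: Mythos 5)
Your proof is correct, and for items \eqref{I1}--\eqref{I3} it agrees in substance with the paper's: the paper checks directly that the Euclidean distance on $\R^3$ is left-invariant for the $\widetilde{\mathrm{SE}}(2)$ product and writes down the rescaled flat metric on $\R^2\times(\R/2\pi k\Z)$ in Proposition \ref{p:SE(2)_k_R2S1}, which is exactly your screw-motion/flat-cylinder picture read through Proposition \ref{p:common_model}. The genuine divergence is in \eqref{I4}--\eqref{I7}. For \eqref{I6}--\eqref{I7} the paper cites Milnor for the existence of left-invariant metrics of constant negative curvature on $D_1$ and $C_\lambda$ and then invokes uniqueness of the simply connected hyperbolic space form, whereas you exhibit these groups explicitly as the simply transitive subgroups $\C\rtimes\R_{>0}$ and $\C\rtimes\{e^{s(\lambda\pm i)}\}$ of $\C\rtimes\C^{*}\subset\mathrm{Isom}(\H^3)$; this is a self-contained proof of the same fact (and essentially how Milnor's statement is established). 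For \eqref{I4}--\eqref{I5} the paper applies the theorem of Cowling et al.\ (Theorem \ref{t:Cowlingetal}) to the Iwasawa decompositions $\widetilde{\mathrm{SL}}(2)=ANK$ with $AN\cong\Aff$, $K\cong\R$, and $\mathrm{PSL}(2)_k=ANK_k$ with $K_k$ compact, obtaining Propositions \ref{p:SL(2)AffR} and \ref{p:PSL(2)_kAffS1} in one stroke; your route through the Thurston model on $\widetilde{\mathrm{SL}}(2)$ coincides for \eqref{I4} with what the paper proves separately in Remark \ref{r:model_SL(2)} (there only to identify the common model as a Thurston geometry, not as the proof of Proposition \ref{p:I}), and your treatment of \eqref{I5} by descending both simply transitive actions to the quotient by the index-$k$ central subgroup $kZ$ is a genuinely different argument, valid because $kZ$ is central in $\widetilde{\mathrm{SL}}(2)$ and lies in the central $\R$-factor of $\AffR$. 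What your approach buys is an explicit common geometric model in every case, so Theorem \ref{t:simplyConn} and its analogue for the multiply connected pairs come for free; what it costs is the case-by-case verification of freeness and transitivity that the Iwasawa-decomposition theorem packages abstractly and that generalizes at once to all connected semisimple groups. One cosmetic slip: your three families correspond to the quasi-isometry classes $(3)$--$(4)$, $(6)$ and $(8)$, and $(10)$ of Theorem \ref{t:QIClass}, not to $(4)$, $(6)$, $(10)$ alone.
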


   \begin{proof}
  It is well known that $\R^3$ and $\widetilde{\mathrm{SE}}(2)$ may be made isometric, see for instance
\cite[Corollary 4.8]{MILNOR1976293}, \cite[Theorem 2.14, (1-b)]{MR2963596}, and \cite[\Sec 4]{KLD}; or read the discussion later in this section.
   The statement that $\mathrm{SE}(2)_k$ may be made isometric to $\R^2 \times \T^1$ is Proposition \ref{p:SE(2)_k_R2S1}. As a corollary, the groups $\mathrm{SE}(2)_k$ and $\mathrm{SE}(2)_{k'}$ for arbitrary $k,k'\in \N$ may be made isometric.  Proposition \ref{p:SL(2)AffR} shows that $\widetilde{\mathrm{SL}}(2)$ and $\AffR$ may be made isometric.
By Proposition \ref{p:PSL(2)_kAffS1}, $\Aff \times \T^1$ may be made isometric to  $\mathrm{PSL}(2)_k$ for every value of $k\in \mathbb{N}$.\\
The items  \eqref{I6} and \eqref{I7} in Proposition \ref{p:I} follow by curvature considerations.
On the (simply connected) groups $D_1$ and on $C_{\lambda}$, $\lambda>0$, one can find a left-invariant Riemannian distance with constant negative sectional curvature: for $D_1$, this follows from Special Example 1.7 in Milnor's article \cite{MILNOR1976293}, for $C_{\lambda}$, $\lambda>0$, it is a consequence of \cite[Theorem 4.11]{MILNOR1976293}; see also \cite[Lemma 2.13 and Theorem 2.14, (1-a)]{MR2963596} and \cite[Introduction]{MR3180486}. It is well known that every simply connected and complete Riemannian manifold with negative constant sectional curvature $K$ is isometric to hyperbolic space in the respective dimension with sectional curvature $K$, hence all the groups $D_1$ and $C_{\lambda}$, $\lambda>0$ may be made isometric to hyperbolic $3$-space, and thus also to each other.
\end{proof}

We now provide the details for the results that have been used in the proof of Proposition \ref{p:I} and for which no other reference has been given. The groups to be considered are $\widetilde{\mathrm{SE}}(2)$, $\widetilde{\mathrm{SL}}(2)$, and quotients thereof.
The simply connected Lie group $\widetilde{\mathrm{SE}}(2)$ is isomorphic to $(\mathbb{R}^3,\ast)$, where
\begin{align*}
(x,y,\theta)\ast (x',y',\theta') &=  \left(\begin{pmatrix}x\\y\end{pmatrix}+ \begin{pmatrix}\cos \theta & -\sin \theta \\ \sin \theta& \cos \theta\end{pmatrix}\begin{pmatrix}x'\\y'\end{pmatrix},\theta + \theta'\right)\\
&= (x+ x'\cos \theta -y'\sin \theta , y + x' \sin \theta + y'\cos \theta, \theta + \theta').
\end{align*}
A direct computation shows that the Euclidean distance $d_E$ on $\mathbb{R}^3$ is left-invariant with respect $\ast$, and hence $\R^3$ and $\widetilde{\mathrm{SE}}(2)$ may be made isometric. It is easy to verify that the sets $(N_k,\ast)$, $k\in\mathbb{N}$, given  by
\begin{displaymath}
N_k = \{(0,0,2\pi k m): m\in \Z\},
\end{displaymath}
are exactly the discrete normal subgroups of $\widetilde{\mathrm{SE}}(2)$.
Every $k\in \mathbb{N}$ gives thus rise to a multiply connected Lie group
\begin{displaymath}
\mathrm{SE}(2)_k:= \widetilde{\mathrm{SE}}(2)/ N_k.
\end{displaymath}
The center of $\mathrm{SE}(2)_k$ contains exactly $k$ elements, which shows that $\mathrm{SE}(2)_k$ is not isomorphic to $\mathrm{SE}(2)_l$ for $k\neq l$.
Moreover, $\mathrm{SE}(2)_k$ is isomorphic to $(\mathbb{R}^2 \times (\mathbb{R}/2\pi k \Z),\ast_k)$, where
\begin{displaymath}
(x,y,\theta) \ast_k (x',y',\theta') = (x+ x'\cos \theta -y'\sin \theta , y + x' \sin \theta + y'\cos \theta, \theta + \theta').
\end{displaymath}

\begin{prop}\label{p:SE(2)_k_R2S1}
For every $k\in\mathbb{N}$, the group $\mathrm{SE}(2)_k$ may be made isometric to the standard round cylinder $\R^2 \times \R/\Z$.
\end{prop}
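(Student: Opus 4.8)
The plan is to produce an explicit left-invariant metric on $\mathrm{SE}(2)_k$ together with an explicit isometry onto the standard round cylinder, reusing the computation already made for $\widetilde{\mathrm{SE}}(2)$. First I would record the left-invariant frame on $\mathrm{SE}(2)_k \cong (\mathbb{R}^2 \times \mathbb{R}/2\pi k\Z, \ast_k)$. Differentiating $\ast_k$-left translations at the identity (exactly as one does on $\widetilde{\mathrm{SE}}(2)$, since the two groups share the same Lie algebra) gives the left-invariant vector fields
\begin{displaymath}
X_1 = \cos\theta\,\partial_x + \sin\theta\,\partial_y, \qquad X_2 = -\sin\theta\,\partial_x + \cos\theta\,\partial_y, \qquad X_3 = \partial_\theta,
\end{displaymath}
whose coefficients are $2\pi k$-periodic in $\theta$ and hence descend to a globally defined frame on the quotient.

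Next I would fix a parameter $\lambda>0$ and consider the metric $g_\lambda = \d x^2 + \d y^2 + \lambda^2\, \d\theta^2$. A one-line computation gives $g_\lambda(X_1,X_1)=\cos^2\theta+\sin^2\theta=1$, similarly $g_\lambda(X_2,X_2)=1$ and $g_\lambda(X_3,X_3)=\lambda^2$, with all mixed terms vanishing; since $g_\lambda$ assigns constant inner products to the left-invariant frame, it is the left-invariant metric induced by the inner product making $X_1,X_2,\lambda^{-1}X_3$ orthonormal, and in particular it is well defined on $\mathrm{SE}(2)_k$. The core circle $\theta\mapsto(0,0,\theta)$, $\theta\in[0,2\pi k]$, is then a closed geodesic of length $2\pi k\,\lambda$, so I would choose $\lambda = \tfrac{1}{2\pi k}$ to make this length equal to $1$.

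Finally I would verify that the diffeomorphism $\phi\colon \mathbb{R}^2\times\mathbb{R}/2\pi k\Z \to \mathbb{R}^2\times\mathbb{R}/\Z$, $\phi(x,y,\theta)=(x,y,\theta/(2\pi k))$, pulls the standard product metric $\d x^2+\d y^2+\d t^2$ back to $g_\lambda$ with $\lambda=1/(2\pi k)$; this is immediate from $\phi^{\ast}\d t = \d\theta/(2\pi k)$. Hence $(\mathrm{SE}(2)_k,g_\lambda)$ is isometric to the standard round cylinder while $g_\lambda$ is left-invariant, which is exactly the assertion. Equivalently, in the language of Proposition \ref{p:common_model}, both $\mathrm{SE}(2)_k$ and the abelian group $\mathbb{R}^2\times\mathbb{R}/\Z$ act simply transitively by isometries on the flat cylinder $M=\mathbb{R}^2\times\mathbb{R}/\Z$, the latter by translations and the former by the screw motions transported through $\phi$. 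I do not expect any serious obstacle; the only point requiring care is the choice of scaling factor $\lambda$. Since the length of the core circle is a Riemannian invariant of a flat cylinder, the naive Euclidean metric $\d x^2+\d y^2+\d\theta^2$ (which yields a circle of length $2\pi k$) will \emph{not} do, and one genuinely must compress the $\theta$-direction to match the prescribed circumference $1$ of $\mathbb{R}/\Z$.
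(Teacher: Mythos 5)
Your proposal is correct and follows essentially the same route as the paper: the paper defines the flat product distance on $\R^2\times(\R/2\pi k\Z)$ with the angular coordinate scaled by $(2\pi k)^{-1}$ and uses the rescaling map $(x,y,\theta)\mapsto(x,y,2\pi k\theta)$ as the isometry to $\R^2\times\R/\Z$, which is exactly your metric $g_{1/(2\pi k)}$ and your map $\phi^{-1}$. Your explicit verification of left-invariance via the frame $X_1,X_2,\lambda^{-1}X_3$ is a welcome elaboration of a step the paper leaves as a direct computation.
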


\begin{proof}
We construct  a left-invariant distance on $\mathrm{SE}(2)_k$, by setting
\begin{equation}\label{eq:SE(2)_k}
d_{SE(2)_k}((x,y,\theta),(x',y',\theta')):= \sqrt{\|(x,y)-(x',y')\|^2+( {(2\pi k)}^{-1}d_{\R/ 2\pi k \Z}(\theta,\theta'))^2}
\end{equation}
for $(x,y,\theta)$ and $(x',y',\theta')$ in $\R^2 \times (\R/ 2\pi k \Z)$. Here
\begin{displaymath}
d_{\R/ 2\pi k \Z}(\theta,\theta'):= \min_{m\in \mathbb{Z}} \{|2\pi k m - (\theta-\theta')|\},
\end{displaymath}
Then the map $\Psi: \R^2 \times \R/\Z \to \R^2 \times (\R/ 2\pi k \Z)$ given by
\begin{displaymath}
\Psi(x,y,{\theta}) = (x,y,2 \pi k\theta)
\end{displaymath}
provides an isometry between $\R^2 \times \R/\Z$ and $\mathrm{SE}(2)_k$.

\end{proof}

We now turn our attention to $\widetilde{\mathrm{SL}}(2)$ and its quotients. Since $\widetilde{\mathrm{SL}}(2)$ is a simple Lie group, \cite[Corollary 3.11]{2017arXiv170509648C} is useful:

\begin{thm}[Cowling et al.]\label{t:Cowlingetal}
Let $G$ be a connected semisimple Lie group and let $G=ANK$ be its Iwasawa decomposition.
Write $K$ as $V \times K'$, where $V$ is a vector
group and $K'$ is compact. Then $G$ may be made isometric to the direct
product $AN \times V \times K'$.
\end{thm}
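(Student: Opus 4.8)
The plan is to produce a single Riemannian manifold on which both $G$ and the direct product $AN\times V\times K'$ act simply transitively by isometries, and then to invoke Proposition \ref{p:common_model}. Write $S:=AN$ for the solvable Iwasawa factor, so that the Iwasawa decomposition furnishes a diffeomorphism $S\times K\to G$, $(s,k)\mapsto sk$; in particular every $g\in G$ has a unique expression $g=sk$ with $s\in S$ and $k\in K$. Since $K\cong V\times K'$, it suffices to show that $G$ and $S\times K$ may be made isometric.

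The metric I would use is the left-invariant Riemannian metric $\rho$ on $G$ induced by the inner product $\langle X,Y\rangle:=-B(X,\theta Y)$ on $\g$, where $B$ is the Killing form and $\theta$ is the Cartan involution with fixed-point algebra $\mathfrak{k}=\Lie(K)$. Using the Cartan decomposition $\g=\mathfrak{k}\oplus\mathfrak{p}$, on which $B$ is negative, respectively positive, definite, one checks that $\langle\cdot,\cdot\rangle$ is positive definite; and since $\theta$ commutes with $\Ad(K)$ and $B$ is $\Ad(G)$-invariant, the inner product is $\Ad(K)$-invariant. The first feature guarantees that $\rho$ is a genuine Riemannian metric, while the second guarantees that $\rho$ is invariant under right translations by $K$, in addition to being left-$G$-invariant by construction.

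With this metric in hand, I would let $S\times K$ act on the manifold $G$ by $(s,k)\cdot g:=sgk^{-1}$. This is a left action of the direct product; it is by isometries, because $\rho$ is both left-invariant (hence left-$S$-invariant) and right-$K$-invariant; and it is simply transitive, transitivity and freeness following at once from the existence and uniqueness of the Iwasawa factorisation $g=sk$. Thus $G$, acting on itself by left translations, and $S\times K$ act simply transitively by isometries on the same space $(G,\rho)$, and Proposition \ref{p:common_model} yields that $G$ and $S\times K=AN\times V\times K'$ may be made isometric.

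The one point requiring care is the existence of the $\Ad(K)$-invariant inner product when $G$ has infinite centre, so that $K$ is genuinely noncompact (for instance $K=\widetilde{\mathrm{SO}}(2)=\R$ for $G=\widetilde{\mathrm{SL}}(2)$). Here one uses that the centre of $G$ lies in the kernel of $\Ad$, so that $\Ad(K)$ is the compact maximal compact subgroup of the adjoint group and the expression $-B(\cdot,\theta\cdot)$ still produces an $\Ad(K)$-invariant metric; equivalently, $B$ remains negative definite on $\mathfrak{k}$ because $\mathfrak{k}$ is of compact type, irrespective of the global topology of $K$. This structural input is the main obstacle; once it is granted, the remaining verifications are the routine bookkeeping with the Iwasawa decomposition indicated above.
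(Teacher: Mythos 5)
Your argument is correct, but note that the paper itself does not prove this statement: Theorem \ref{t:Cowlingetal} is imported verbatim from the cited work of Cowling et al.\ (Corollary 3.11 there), so there is no internal proof to compare against. What you have written is a correct, self-contained proof along the lines of the standard argument: equip $G$ with the left-invariant metric coming from the inner product $-B(\cdot,\theta\cdot)$ on $\g$, observe that its $\Ad(K)$-invariance makes the metric additionally right-$K$-invariant, and let $AN\times K$ act on $G$ by $(s,k)\cdot g=sgk^{-1}$; simple transitivity of this action is precisely the existence and uniqueness of the Iwasawa factorization, and Proposition \ref{p:common_model} then converts the two simply transitive isometric actions on $(G,\rho)$ into the conclusion. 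Your handling of the only genuinely delicate point---that $K$ may be noncompact when $G$ has infinite centre, as for $\widetilde{\mathrm{SL}}(2)$---is also right: the negative definiteness of $B$ on $\mathfrak{k}$ and the commutation of $\theta$ with $\Ad(K)$ are Lie-algebra-level facts (the latter because $K$ is connected and $\Ad(K)$ preserves both $\mathfrak{k}$ and $\mathfrak{p}$), so they are insensitive to the global topology of $K$. The one step worth spelling out for a careful reader is the equivalence between right-$K$-invariance of a left-invariant metric and $\Ad(K)$-invariance of the inner product at the identity, but this is routine. Your proof has the added value of making the paper's applications of this theorem (Propositions \ref{p:SL(2)AffR} and \ref{p:PSL(2)_kAffS1}) self-contained rather than dependent on the external reference.
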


If $K$ is compact, then $G$ may be made isometric to $AN \times K$. A condition which ensures the compactness of $K$ for a given semisimple Lie group is that $G$ has finite center,
see \cite[p.160 in Chapter 4]{gorbatsevich1994lie}. A connected semisimple Lie group that is linear  has finite center, see for instance \cite[Chapter 1, \Sec 5]{gorbatsevich1994lie}.

The Iwasawa decomposition of $\widetilde{\mathrm{SL}}(2)$ is  $ANK$, where $A$ and $N$ are the following matrix groups
\begin{align*}
A= \left\{\begin{pmatrix}e^t & 0 \\ 0& e^{-t}\end{pmatrix}:\; t\in \mathbb{R}\right\},\quad
N= \left\{\begin{pmatrix}1 & x \\ 0& 1\end{pmatrix}:\; x\in \mathbb{R}\right\},
\end{align*}
and $K$ is isomorphic to $\R$. More precisely, the Iwasawa decomposition is given by the diffeomorphism
\begin{displaymath}
\phi: \R^3 \to \widetilde{\mathrm{SL}}(2)
\end{displaymath}
so that $\phi(0,0,0)=\mathrm{I}$ and
\begin{displaymath}
(\pi \circ \phi)(t,x,\theta) = \begin{pmatrix}e^t & 0 \\ 0& e^{-t}\end{pmatrix} \begin{pmatrix}1 & x \\ 0& 1\end{pmatrix} \begin{pmatrix}\cos \theta & \sin \theta \\ -\sin \theta& \cos\theta\end{pmatrix},
\end{displaymath}
where $\pi:\widetilde{\mathrm{SL}}(2) \to \mathrm{SL}(2)$ is the universal covering projection.
Note that $AN$ is isomorphic to the orientation-preserving affine maps of the real line, that is, to $\Aff$.

Theorem \ref{t:Cowlingetal} applied to the Iwasawa decomposition of  $\widetilde{\mathrm{SL}}(2)$ yields the following statement.

\begin{prop}\label{p:SL(2)AffR}
The groups $\widetilde{\mathrm{SL}}(2)$ and $\AffR$ may be made isometric.
\end{prop}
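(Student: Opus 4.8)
The plan is to apply Theorem \ref{t:Cowlingetal} directly to $G = \widetilde{\mathrm{SL}}(2)$, using the Iwasawa decomposition $ANK$ just described. First I would record that $\widetilde{\mathrm{SL}}(2)$ is a connected semisimple Lie group: semisimplicity is a property of the Lie algebra $\mathfrak{sl}(2,\mathbb{R})$ and is therefore shared by every connected group with that Lie algebra, in particular by the universal cover. Thus the hypotheses of Theorem \ref{t:Cowlingetal} are satisfied.

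The crucial observation is the shape of the $K$ factor. For $\mathrm{SL}(2,\mathbb{R})$ the maximal compact subgroup is $\mathrm{SO}(2)\cong \mathbb{T}^1$, but after passing to the universal cover $\widetilde{\mathrm{SL}}(2)$ this factor unwinds to its own universal cover $\mathbb{R}$, as we recorded above where we observed that $K$ is isomorphic to $\R$. Hence $K$ is already a vector group, and in the decomposition $K = V\times K'$ appearing in Theorem \ref{t:Cowlingetal} we may simply take $V = \R$ and $K' = \{e\}$ trivial (and thus compact).

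With this choice the theorem yields that $\widetilde{\mathrm{SL}}(2)$ may be made isometric to $AN \times V \times K' = AN \times \R$. Since $AN$ is isomorphic to $\Aff$, as noted in the discussion of the Iwasawa decomposition, we conclude that $\widetilde{\mathrm{SL}}(2)$ may be made isometric to $\Aff \times \R = \AffR$, which is the claim.

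There is essentially no technical obstacle to overcome here, as the full strength of the statement is already packaged in Theorem \ref{t:Cowlingetal}. The only point requiring care is the identification $K \cong \R$: this is precisely the feature distinguishing the simply connected group $\widetilde{\mathrm{SL}}(2)$ from $\mathrm{SL}(2,\mathbb{R})$ itself, and it is exactly this non-compact vector-group alternative that Theorem \ref{t:Cowlingetal} was designed to accommodate. One should simply be sure to invoke the vector-group branch $V = \R$ rather than the compact branch, since otherwise the theorem would not apply.
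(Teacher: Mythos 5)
Your proposal is correct and follows exactly the paper's argument: the paper likewise obtains the result by applying Theorem \ref{t:Cowlingetal} to the Iwasawa decomposition $ANK$ of $\widetilde{\mathrm{SL}}(2)$, with $K\cong\R$ a vector group and $AN\cong\Aff$. Your explicit identification $V=\R$, $K'=\{e\}$ just spells out what the paper leaves implicit.
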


\begin{rem}\label{r:Aff1}   The group $\Aff$ admits a left-invariant metric of constant negative sectional curvature (see for instance \cite[Special Example 1.7]{MILNOR1976293}) and hence, by the same reasoning as in the proof of Proposition \ref{p:I}, it may be made isometric to the hyperbolic plane $\H^2$.
The quasi-isometric, or even bi-Lipschitz, equivalence of $ \H^2\times \R$ and $ \widetilde{\mathrm{SL}}(2)$ was proved earlier by E.\ Rieffel in her PhD thesis  {\cite{rieffel1993groups}}. The idea of the construction is explained in \cite[\Sec 2]{MR1650098}.
To set the stage, we follow \cite[p.462]{doi:10.1112/blms/15.5.401} and observe that the standard Riemannian metric on $\H^2$ induces a natural Riemannian metric on $T\H^2$ in such a way that for every isometry $f:\H^2 \to \H^2$, the differential $df: T\H^2 \to T\H^2$ is an isometry as well. Since the unit tangent bundle $UT\H^2$ is a submanifold of $T\H^2$, it inherits a Riemannian metric from $T\H^2$, and as $UT(\H^2)$ may be identified with  $\mathrm{PSL}(2)$,
this metric lifts to $\widetilde{\mathrm{SL}(2)}$. One can show that the thus obtained Riemannian metric on $\widetilde{\mathrm{SL}(2)}$
is left-invariant, see \cite[p.464]{doi:10.1112/blms/15.5.401}.

To prove the bi-Lipschitz equivalence of  $ \H^2\times \R$ and $ \widetilde{\mathrm{SL}}(2)$, one constructs  a map
\begin{displaymath}
f:UT( {\H^2})\to  {\H^2} \times \S^1,\quad f(v):=(x,\phi(v)),
\end{displaymath}
 as follows:
 first, one fixes  a point $p_0\in  {\H^2}$, then, for $v\in UT_x( {\H^2})$, the vector $\phi(v)\in UT_{p_0}( {\H^2})$ is obtained by parallel transporting $v$ along the geodesic segment $[xp_0]$. One then proves that $f$ is bi-Lipschitz; see \cite[Proposition 3.10]{KNotes}, and \cite[IV.48]{MR1786869} for more details. Since $f$ lifts to a bi-Lipschitz map between universal covers, see Proposition \ref{p:lift}, this reasoning shows that $ \H^2\times \R$ and $ \widetilde{\mathrm{SL}}(2)$ are bi-Lipschitz equivalent, and in particular quasi-isometrically equivalent.

\end{rem}

\begin{rem}\label{r:nonIsostd}
By Proposition \ref{p:SL(2)AffR}, the group $\widetilde{\mathrm{SL}}(2)$ may be made isometric to $\AffR$.  Moreover, according to Remark \ref{r:Aff1}, the group $\AffR$ may be made isometric to $\H^2 \times \R$ with the standard metric. However, this does not imply that $\widetilde{\mathrm{SL}}(2)$ can be made isometric to the standard $\H^2 \times \R$, and indeed this is not the case: An isometry between the $\H^2 \times \R$ and $\widetilde{\mathrm{SL}}(2)$ with a left-invariant distance would induce a free transitive isometric action of $\widetilde{\mathrm{SL}}(2)$ on $\H^2 \times \R$.
Notice that every isometry $f$ of  $\H^2 \times \R$ sends a set of the form $\H^2 \times\{p\}$ to the set     $\H^2 \times\{f(p)\}$, since these sets are the leaves of the foliation integrating the planes of sectional curvature $-1$.
Thus, if $\widetilde{\mathrm{SL}}(2)$ acts by isometry on $\H^2 \times \R$, then the induced action on $\R$ would be by translations, since $\widetilde{\mathrm{SL}}(2)$ is connected. At the same time, the action would have to be trivial since $\widetilde{\mathrm{SL}}(2)$ is simple, so  it could not act transitively on $\H^2 \times \R$. See also \cite[Section 5]{doi:10.1112/blms/15.5.401}.
\end{rem}

Since the groups $\widetilde{\mathrm{SL}}(2)$ and $\AffR$ may be made isometric, one might wonder if there is a ``standard'' Riemannian manifold to which they may both be made isometric. According to Remark \ref{r:nonIsostd}, this manifold cannot be the standard $\H^2 \times \R$, but it turns out that $\widetilde{\mathrm{SL}}(2)$ endowed with the metric corresponding to one of the Thurston geometries has the desired property; see Remark  \ref{r:model_SL(2)} below.

Consider the left-invariant Riemannian metric $g_{\widetilde{\mathrm{SL}}(2)}$ on $X:=\widetilde{\mathrm{SL}}(2)$ that arises from the identification of $\mathrm{PSL}(2)$ with the unit tangent bundle $UT(\H^2)$ as described in  Remark \ref{r:Aff1} and let $G:=\mathrm{Isome}(\widetilde{\mathrm{SL}}(2))$ be the corresponding isometry group. Then $(X,G)$ is one of the eight three-dimensional model geometries of Thurston \cite[Theorem 3.8.4]{thurston1997three}.
Clearly, $\widetilde{\mathrm{SL}}(2)$ acts transitively by isometries on $(X,g_{\widetilde{\mathrm{SL}}(2)})$. The following remark shows that the same is true for $\AffR$.

\begin{rem}\label{r:model_SL(2)}
The group $\AffR$ acts simply transitively by isometries on $X$ endowed with the Riemannian metric that corresponds to Thurston's model geometry on $\widetilde{\mathrm{SL}}(2)$.
To see this,
consider the group $G:=\mathrm{Isome}(\widetilde{\mathrm{SL}}(2))$, which has been discussed in \cite[p.464 ff]{doi:10.1112/blms/15.5.401}. It has been show that $G$ consists of two components, say $\Gamma$ and $\Gamma'$. The identity component $\Gamma$ is a $4$-dimensional Lie group generated by the actions of $\R$ and $\widetilde{\mathrm{SL}}(2)$ on $X$. The action of $\widetilde{\mathrm{SL}(2)}$ is immediate, and according to the Iwasawa decomposition, it yields in particular an action of $\Aff$ on $X$.
To explain the action of $\R$, it is useful to see $X$ as a line bundle over $\H^2$. The center of  $\widetilde{\mathrm{SL}}(2)$, which is isomorphic to the additive group $\Z$, acts on $X$ by preserving the line bundle structure and covering the identity map of $\H^2$. This action extends to an action of $\R$ on $X$ by translation of the vertical fibers (this action arises as an action of $S^1$ on $UT(\H^2)$ which covers the identity of $\H^2$ and rotates each fibre by the same angle). Since the action of $\R$ commutes with the action of $\widetilde{\mathrm{SL}}(2)$ (and thus of $\Aff$), we obtain that $\AffR$ acts by isometries on $X$. Moreover, since $\AffR$ acts transitively on the base manifold  $\H^2$  of $X$, and $\R$ acts by translation on the vertical fibers, we see that $\AffR$ acts transitively on $X$. Finally, we argue that the action is free. Assume that $(g,s).x=(g',s').x$ for some $g\in \Aff$, $s\in \R$ and $x\in X$. Then, since the action of $\R$ covers the identity map of $\H^2$, it follows that $g.x$ and $g'.x$ must lie in the same vertical fibre of $X$. As the action of $\Aff$ on $X$ is induced by a free action of $\Aff$ on $\H^2$, it follows that $g=g'$, as desired. This shows that $\AffR$ acts simply transitively by isometries on $(X,g_{\widetilde{\mathrm{SL}}(2)})$.
\end{rem}


\begin{rem}
As the classification in Theorem \ref{t:QIClass} shows, already in dimension 3 the property of admitting a lattice (i.e., a discrete subgroup of cofinite volume) is not a quasi-isometric invariant. For example, the group  {$\Aff\times \T^1$} is not unimodular by \cite[Lemma 6.3]{MILNOR1976293} and hence cannot have lattices (see \cite[Section 6]{MILNOR1976293} or \cite[Proposition 2.4.2]{MR2364699}), yet it is quasi-isometrically equivalent to $\mathrm{SL}(2)=\mathrm{SL}(2, \R)$, which    admits the lattice $\mathrm{SL}(2,\Z)$.

\end{rem}

For $k \in \mathbb{N}$, the Iwasawa decomposition of $\mathrm{PSL}(2)_k$ is
\begin{displaymath}
\mathrm{PSL}(2)_k = ANK_k,
\end{displaymath}
where $K_k$ is the $k$-fold cover of the projective special orthogonal group $\mathrm{PSO}(2)$.

Theorem \ref{t:Cowlingetal} applied to  the Iwasawa decomposition of  $\mathrm{PSL}(2)_k$ yields the following result.

\begin{prop}\label{p:PSL(2)_kAffS1}
For every $k\in\mathbb{N}$, the group $\mathrm{PSL}(2)_k$ may be made isometric to  $\Aff \times  \T^1$.
\end{prop}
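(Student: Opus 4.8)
The plan is to apply Theorem \ref{t:Cowlingetal} directly, exactly as in the proof of Proposition \ref{p:SL(2)AffR}, the only new ingredient being a careful analysis of the compact part $K_k$ of the Iwasawa decomposition. First I would record that $\mathrm{PSL}(2)_k$ is a connected semisimple Lie group: it is a finite ($k$-fold) cover of the simple group $\mathrm{PSL}(2,\R)$, hence shares its Lie algebra $\mathfrak{sl}(2,\R)$ and is in particular semisimple, with finite center. Its Iwasawa decomposition is $\mathrm{PSL}(2)_k = ANK_k$, where $A$ and $N$ are the diagonal and unipotent upper-triangular matrix groups already described for $\widetilde{\mathrm{SL}}(2)$ — since $A$ and $N$ are simply connected they embed and lift isomorphically to any cover, so $AN \cong \Aff$ as noted in the text — and $K_k$ is the $k$-fold cover of $\mathrm{PSO}(2)$.

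The key observation is that $K_k$ is compact. Indeed $\mathrm{PSO}(2)$ is isomorphic to the circle $\T^1$, whose fundamental group is $\Z$; a $k$-fold cover corresponds to the subgroup $k\Z$, so $K_k \cong \R/k\Z$ is again a circle, that is, $K_k \cong \T^1$. This is also consistent with the finiteness of the center of $\mathrm{PSL}(2)_k$, which by the discussion following Theorem \ref{t:Cowlingetal} already guarantees compactness of $K_k$. Here lies the one point that must not be confused, and which I expect to be the main obstacle: for the \emph{universal} cover $\widetilde{\mathrm{SL}}(2)$ the analogous group $K$ is the \emph{infinite} cover $\R$ of $\mathrm{PSO}(2)$, a vector group, which is precisely why Proposition \ref{p:SL(2)AffR} produces the factor $\R$; for a \emph{finite} $k$-fold cover the rotation part stays compact and unwinds only to a longer circle.

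With $K_k$ compact we may apply Theorem \ref{t:Cowlingetal} with the trivial vector group $V=\{0\}$ and $K'=K_k$, concluding that $\mathrm{PSL}(2)_k$ may be made isometric to $AN \times K_k$. Finally I would identify the two factors: $AN \cong \Aff$ and $K_k \cong \T^1$, so that $\mathrm{PSL}(2)_k$ may be made isometric to $\Aff \times \T^1$, as claimed. The argument is essentially immediate once the structure of $K_k$ is pinned down; all the real work is the verification that the finite cover $K_k$ remains compact rather than unwinding to $\R$, which is exactly what distinguishes this proposition from the $\widetilde{\mathrm{SL}}(2)$ case.
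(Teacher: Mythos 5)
Your proposal is correct and follows exactly the paper's own route: apply Theorem \ref{t:Cowlingetal} to the Iwasawa decomposition $\mathrm{PSL}(2)_k = ANK_k$, with $AN \cong \Aff$ and the compact factor $K_k$ (the $k$-fold cover of $\mathrm{PSO}(2)$, hence a circle) playing the role of $K'$ with trivial vector part $V$. Your explicit justification that $K_k \cong \R/k\Z \cong \T^1$ remains compact, in contrast to the universal-cover case where $K \cong \R$, is precisely the point the paper leaves implicit.
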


     \subsection{Bi-Lipschitz groups}

     \begin{prop}\label{p:BL}
     The groups $\mathrm{PSL}(2)_k$ and $\mathrm{PSL}(2)_{k'}$ for different values of $k,k'\in \N$ are bi-Lipschitz equivalent, but cannot be made isometric.
     \end{prop}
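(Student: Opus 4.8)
The plan is to prove the two assertions separately: the bi-Lipschitz equivalence is immediate from the results already established, whereas the non-existence of isometric models is the real content.

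For the bi-Lipschitz part I would simply chain the isometries supplied by Proposition~\ref{p:PSL(2)_kAffS1}. For each $k\in\N$ that proposition gives left-invariant Riemannian distances under which $\mathrm{PSL}(2)_k$ is isometric to $\Aff\times\T^1$. Since any two left-invariant Riemannian distances on the fixed group $\Aff\times\T^1$ are bi-Lipschitz equivalent, composing the isometry $\mathrm{PSL}(2)_k\to\Aff\times\T^1$, a bi-Lipschitz self-map of $\Aff\times\T^1$, and the isometry $\Aff\times\T^1\to\mathrm{PSL}(2)_{k'}$ produces a bi-Lipschitz homeomorphism $\mathrm{PSL}(2)_k\to\mathrm{PSL}(2)_{k'}$.

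For the negative part I argue by contradiction. If $\mathrm{PSL}(2)_k$ and $\mathrm{PSL}(2)_{k'}$ could be made isometric, then by Proposition~\ref{p:common_model} there is a Riemannian $3$-manifold $M$ on which $G:=\mathrm{PSL}(2)_k$ and $H:=\mathrm{PSL}(2)_{k'}$ both act simply transitively by isometries for one fixed metric. Being connected, $G$ and $H$ lie in $\bar G:=\Iso_0(M)$, which therefore acts transitively; its point stabiliser is compact and sits in $O(3)$, so $\dim\bar G\in\{3,4,6\}$. If $\dim\bar G=3$, then $G$ and $H$ are connected $3$-dimensional subgroups of the connected $3$-dimensional group $\bar G$, hence $G=\bar G=H$, which is absurd since $\mathrm{PSL}(2)_k\not\cong\mathrm{PSL}(2)_{k'}$ for $k\neq k'$ (their centres have orders $k$ and $k'$). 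If $\dim\bar G=6$, then $M$ has constant curvature; as $M$ is diffeomorphic to $\mathrm{PSL}(2)_k\cong\R^2\times(\R/\Z)$ it has contractible universal cover, hence this cover is flat $\R^3$ or $\H^3$, and the universal cover $\widetilde{\mathrm{SL}}(2)$ of $G$ would act simply transitively by isometries on it; but simply transitive isometry groups of $\R^n$ and $\H^n$ are solvable, contradicting the non-solvability of $\widetilde{\mathrm{SL}}(2)$.

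The decisive case is $\dim\bar G=4$, which I expect to be the main obstacle and the heart of the argument. Here the stabiliser is $1$-dimensional and $\bar{\mathfrak g}:=\Lie(\bar G)$ is a $4$-dimensional Lie algebra containing $\Lie(G)\cong\mathfrak{sl}_2(\R)$. By the Levi decomposition, a maximal semisimple subalgebra $\mathfrak{s}$ has dimension $\geq 3$, forcing $\dim\mathfrak{s}=3$ and a $1$-dimensional radical; since $\mathfrak{sl}_2(\R)$ is perfect it acts trivially on the abelian $1$-dimensional radical, so $\bar{\mathfrak g}\cong\mathfrak{sl}_2(\R)\oplus\R$ with central $\R$ and $[\bar{\mathfrak g},\bar{\mathfrak g}]=\mathfrak{s}$. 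Now any subalgebra isomorphic to $\mathfrak{sl}_2(\R)$ is perfect, hence lies in $[\bar{\mathfrak g},\bar{\mathfrak g}]=\mathfrak{s}$, and by dimension equals $\mathfrak{s}$; thus $\Lie(G)=\Lie(H)=\mathfrak{s}$ is the \emph{unique} such subalgebra. As a connected Lie subgroup is determined by its subalgebra, this gives $G=H$ inside $\bar G$, again forcing $\mathrm{PSL}(2)_k\cong\mathrm{PSL}(2)_{k'}$ and hence $k=k'$, the desired contradiction. The genuine work lies in controlling $\Iso_0(M)$ — ruling out the constant-curvature case and pinning down $\bar{\mathfrak g}\cong\mathfrak{sl}_2(\R)\oplus\R$ in the $4$-dimensional case; once these structural inputs are in place, the uniqueness of the $\mathfrak{sl}_2(\R)$-subalgebra closes the argument cleanly.
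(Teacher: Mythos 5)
Your bi-Lipschitz argument is the same as the paper's, but your proof that the groups cannot be made isometric takes a genuinely different route. The paper invokes C.~Gordon's theorem (Theorem~\ref{t:Gordon}) on decompositions $A=GK$ of the full isometry group: it identifies $\mathrm{PSL}(2)_k=G_{nc}=A_{nc}$ as the non-compact Levi part of $A=\Iso(G,d_G)$, an isomorphism invariant of $A$, so an isometry between the two groups would conjugate the isometry groups and force $\mathrm{PSL}(2)_k\cong\mathrm{PSL}(2)_{k'}$. You instead pass to a common model $M$ via Proposition~\ref{p:common_model} and run a case analysis on $\dim\Iso_0(M)\in\{3,4,6\}$, the heart being the $4$-dimensional case where $\bar{\mathfrak g}\cong\mathfrak{sl}_2(\R)\oplus\R$ has a \emph{unique} subalgebra isomorphic to $\mathfrak{sl}_2(\R)$ (it must lie in $[\bar{\mathfrak g},\bar{\mathfrak g}]$), whence $G=H$ inside $\Iso_0(M)$. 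Both proofs exploit the same phenomenon --- the simple group sits canonically inside its isometry group --- but Gordon's theorem packages this in full generality, whereas your argument is an elementary, self-contained low-dimensional substitute that trades the external reference \cite{MR617247} for the dimension constraint on isometry groups of homogeneous $3$-manifolds. Your chain of reasoning is correct, including the Levi--Malcev step (any semisimple subalgebra is conjugate into a Levi factor, and there is no $4$-dimensional semisimple algebra, so $\dim\mathfrak s=3$) and the lifting of the simply transitive action to universal covers in the constant-curvature case.

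Two auxiliary facts you use deserve explicit justification or citation, though neither is a gap: that $3$-dimensional isotropy forces constant sectional curvature is the standard transitivity of $\mathrm{SO}(3)$ on $2$-planes; and the solvability of connected Lie groups acting simply transitively by isometries on $\R^3$ and $\H^3$ follows from results already in the paper's bibliography, namely Milnor's structure theorem for flat left-invariant metrics \cite{MILNOR1976293} and Heintze's theorem on homogeneous manifolds of negative curvature \cite{MR0353210} (alternatively, for $\H^3$ one can note that $\mathrm{PSL}(2)_k$ belongs to class $(8)$ of Theorem~\ref{t:QIClass} with visual boundary $\S^1$, so it is not quasi-isometric to $\H^3$).
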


The bi-Lipschitz equivalence of $\mathrm{PSL}(2)_k$ and $\mathrm{PSL}(2)_{k'}$ follows easily from Proposition \ref{p:PSL(2)_kAffS1}, but to show that these groups cannot be made isometric, we use \cite[Theorem 2.2]{MR617247} by C.\ Gordon, which we restate here for the reader's convenience.

Assume that $A$ is a connected Lie group with a connected subgroup $G$. Choose Levi factors $G_s$ and $A_s$ of $G$ and $A$, respectively, such that $G_s \subset A_s$, and denote by $\mathfrak{g}_s$ and $\mathfrak{a}_s$ the Lie algebras of $G_s$ and $A_s$. By definition, the Lie algebras $\mathfrak{g}_s$ and $\mathfrak{a}_s$ are semisimple and thus a direct sum of simple Lie algebras, some of which may be compact and others not. This leads to the direct sum decomposition
\begin{displaymath}
\mathfrak{g}_s= \mathfrak{g}_{nc} \oplus \mathfrak{g}_c,
\end{displaymath}
where $\mathfrak{g}_c$ is the direct sum of all compact simple ideals of $\mathfrak{g}_s$ and $\mathfrak{g}_{nc}$ is the direct sum of the remaining simple ideals. In the same way, one decomposes $\mathfrak{a}_s= \mathfrak{a}_{nc} \oplus \mathfrak{a}_c$. By $G_{nc}$ and $A_{nc}$ we denote the connected subgroups of $A$ with Lie algebras $\mathfrak{g}_{nc}$ and $\mathfrak{a}_{nc}$, respectively.

\begin{thm}[Gordon]\label{t:Gordon}
Assume that $A$ is a connected Lie group with a connected subgroup $G$ whose radical is nilpotent. Suppose further that there exists a compact subgroup $K$ of $A$ such that $A= GK$. Then $A_{nc}=G_{nc}$.
\end{thm}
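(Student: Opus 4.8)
The plan is to translate the statement into Lie-algebra language, quotient out the solvable radical of $A$ so as to work inside a semisimple algebra, and there reduce everything to a single assertion about subalgebras that are transitive modulo a maximal compact subalgebra; the nilpotency hypothesis is used precisely to exclude the Iwasawa phenomenon, in which a non-nilpotent solvable group acts simply transitively on a symmetric space of noncompact type. Write $\mathfrak{a},\mathfrak{g},\mathfrak{k}$ for the Lie algebras of $A,G,K$. Since $A=GK$ with $K$ compact we have $\mathfrak{a}=\mathfrak{g}+\mathfrak{k}$ with $\mathfrak{k}$ compactly embedded, and since $A_{nc},G_{nc}$ are by definition the connected subgroups integrating $\mathfrak{a}_{nc},\mathfrak{g}_{nc}$, it suffices to prove the identity $\mathfrak{a}_{nc}=\mathfrak{g}_{nc}$ of subalgebras of $\mathfrak{a}$. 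The easy inclusion $\mathfrak{g}_{nc}\subseteq\mathfrak{a}_{nc}$ is immediate: the projection $\pi\colon\mathfrak{a}_s\to\mathfrak{a}_c$ along the ideal $\mathfrak{a}_{nc}$ is a homomorphism (both summands are ideals), and its restriction to $\mathfrak{g}_{nc}\subseteq\mathfrak{g}_s\subseteq\mathfrak{a}_s$ has semisimple image all of whose simple factors are noncompact, yet this image lies in the compact algebra $\mathfrak{a}_c$, where every subalgebra is compact; a semisimple algebra that is at once compact and a sum of noncompact simple ideals is trivial, so $\pi(\mathfrak{g}_{nc})=0$, i.e.\ $\mathfrak{g}_{nc}\subseteq\mathfrak{a}_{nc}$.

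Next I would pass to the semisimple setting. Let $q\colon\mathfrak{a}\to\mathfrak{a}_s=\mathfrak{a}/\operatorname{rad}(\mathfrak{a})$ be the quotient, normalised so that $q$ is the identity on the Levi factor $\mathfrak{a}_s$ fixed at the outset, and put $\mathfrak{h}:=q(\mathfrak{g})$ and $\mathfrak{k}_s:=q(\mathfrak{k})$. Then $\mathfrak{h}+\mathfrak{k}_s=\mathfrak{a}_s$, the subalgebra $\mathfrak{k}_s$ is compact, and $\operatorname{rad}(\mathfrak{h})=q(\operatorname{rad}(\mathfrak{g}))$ is nilpotent (a homomorphic image of the nilpotent radical of $\mathfrak{g}$). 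One may take $\mathfrak{g}_s$ as a Levi factor of $\mathfrak{h}$, so the noncompact part of $\mathfrak{h}$ is $\mathfrak{g}_{nc}$, while the noncompact part of $\mathfrak{a}_s$ is $\mathfrak{a}_{nc}$; thus, given the easy inclusion, it suffices to show $\mathfrak{a}_{nc}\subseteq\mathfrak{h}$. Enlarging $\mathfrak{k}_s$ to a maximal compact subalgebra $\mathfrak{t}$ and writing the Cartan decomposition $\mathfrak{a}_s=\mathfrak{t}\oplus\mathfrak{p}$, the relation becomes $\mathfrak{h}+\mathfrak{t}=\mathfrak{a}_s$; equivalently, the connected subgroup integrating $\mathfrak{h}$ acts locally transitively on the symmetric space $\mathfrak{a}_s/\mathfrak{t}$ of noncompact type attached to $\mathfrak{a}_{nc}$.

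Everything now rests on the following Crux Lemma: if $\mathfrak{s}$ is semisimple with Cartan decomposition $\mathfrak{s}=\mathfrak{t}\oplus\mathfrak{p}$ and $\mathfrak{h}\subseteq\mathfrak{s}$ is a subalgebra with $\mathfrak{h}+\mathfrak{t}=\mathfrak{s}$ whose radical is nilpotent, then $\mathfrak{s}_{nc}\subseteq\mathfrak{h}$. Since $\mathfrak{s}_c\subseteq\mathfrak{t}$ one first reduces to $\mathfrak{s}=\mathfrak{s}_{nc}$. I would then choose, after an inner conjugation (Mostow), a $\theta$-stable Levi factor $\mathfrak{m}$ of $\mathfrak{h}$, fix a maximal abelian $\mathfrak{a}^{\flat}\subseteq\mathfrak{p}$ with restricted-root decomposition $\mathfrak{s}=\mathfrak{s}_0\oplus\bigoplus_{\alpha}\mathfrak{s}_{\alpha}$ and Iwasawa part $\mathfrak{n}=\bigoplus_{\alpha>0}\mathfrak{s}_{\alpha}$, so that $\mathfrak{s}=\mathfrak{t}\oplus\mathfrak{a}^{\flat}\oplus\mathfrak{n}$. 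Transitivity forces the projection of $\mathfrak{h}$ onto $\mathfrak{a}^{\flat}\oplus\mathfrak{n}$ (along $\mathfrak{t}$) to be onto. The decisive point, and the only place nilpotency enters, is that the split directions $\mathfrak{a}^{\flat}$ cannot be contributed by $\operatorname{rad}(\mathfrak{h})$: if some element of $\operatorname{rad}(\mathfrak{h})$ had a nonzero hyperbolic (split-semisimple) Jordan component, then pairing it with the horospherical directions $\mathfrak{n}$ that $\mathfrak{h}$ must also realise, on which that component acts with nonzero eigenvalues, produces inside the nilpotent ideal $\operatorname{rad}(\mathfrak{h})$ a configuration with nonterminating lower central series, namely the relation $[\mathfrak{a}^{\flat},\mathfrak{n}]=\mathfrak{n}$, contradicting nilpotency. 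Hence the full split rank is carried by $\mathfrak{m}$, so $\operatorname{rank}_{\mathbb{R}}\mathfrak{m}=\operatorname{rank}_{\mathbb{R}}\mathfrak{s}$ and $\mathfrak{a}^{\flat}\subseteq\mathfrak{m}$; an $\operatorname{ad}(\mathfrak{a}^{\flat})$-weight argument upgrades surjectivity onto $\mathfrak{n}$ to the inclusion $\mathfrak{n}\subseteq\mathfrak{h}$, and the semisimplicity of $\mathfrak{m}$ then promotes $\mathfrak{a}^{\flat}\oplus\mathfrak{n}\subseteq\mathfrak{h}$ to $\mathfrak{s}=\mathfrak{s}_{nc}\subseteq\mathfrak{h}$.

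Combining the Crux Lemma with the easy inclusion finishes the proof: $\mathfrak{a}_{nc}=\mathfrak{s}_{nc}$ is then a semisimple ideal of $\mathfrak{h}$ with only noncompact factors, hence lies in the Levi factor $\mathfrak{m}$ and so in $\mathfrak{h}_{nc}=\mathfrak{g}_{nc}$; with $\mathfrak{g}_{nc}\subseteq\mathfrak{a}_{nc}$ this gives $\mathfrak{a}_{nc}=\mathfrak{g}_{nc}$, and therefore $A_{nc}=G_{nc}$. I expect the Crux Lemma to be the main obstacle, and within it the genuinely delicate point is the control of the Jordan (hyperbolic, elliptic, nilpotent) components of elements of $\operatorname{rad}(\mathfrak{h})$ inside the ambient semisimple algebra: a Lie subalgebra need not be algebraic, so ``a nilpotent radical contains no split directions'' must be justified either by passing to an algebraic hull where the Jordan components lie in the appropriate subalgebras, or by appealing to Mostow's structure theory of transitive actions, which is the route underlying Gordon's original argument in \cite{MR617247}.
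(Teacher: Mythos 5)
First, a remark on the comparison you were asked to be judged against: the paper contains no proof of this statement. Theorem \ref{t:Gordon} is imported verbatim from Gordon \cite[Theorem 2.2]{MR617247} (``which we restate here for the reader's convenience'') and is then used as a black box in the proof of Proposition \ref{p:BL}. So your proposal cannot be measured against an in-paper argument; it has to stand on its own as a proof of Gordon's theorem. Its outer layers do stand: $\mathfrak{a}=\mathfrak{g}+\mathfrak{k}$ follows from $A=GK$ via the transitive action of $G$ on $A/K$; the easy inclusion $\mathfrak{g}_{nc}\subseteq\mathfrak{a}_{nc}$ is correct (there is no nonzero homomorphism from a sum of noncompact simple ideals into a compact algebra); and the passage to $\mathfrak{h}=q(\mathfrak{g})$ with $\operatorname{rad}(\mathfrak{h})=q(\operatorname{rad}(\mathfrak{g}))$ nilpotent and $\mathfrak{g}_s$ a Levi factor of $\mathfrak{h}$ is a correct reduction, as is the derivation of the theorem from the Crux Lemma.

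The genuine gap is the Crux Lemma itself, and the decisive step you offer for it does not work as stated. Abstract nilpotency of $\operatorname{rad}(\mathfrak{h})$ imposes no constraint at all on the Jordan type of its elements inside $\mathfrak{s}$: the maximal split torus $\mathfrak{a}^{\flat}$ is abelian, hence nilpotent, and consists entirely of hyperbolic elements, so ``a nilpotent radical contains no split directions'' is false as a standalone principle. What must be proved is the interaction statement, and your sketch does not wire it up: the relation $[\mathfrak{a}^{\flat},\mathfrak{n}]=\mathfrak{n}$ lives in $\mathfrak{s}$, not in $\operatorname{rad}(\mathfrak{h})$; the root-space directions that $\mathfrak{h}$ ``realises'' are realised only modulo $\mathfrak{t}$; and the hyperbolic Jordan component of an element of $\operatorname{rad}(\mathfrak{h})$ need not lie in $\mathfrak{h}$ at all, since subalgebras are not algebraic --- exactly the difficulty you flag in your final paragraph. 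Your only resolution of it is to appeal to an algebraic hull or to ``Mostow's structure theory, which is the route underlying Gordon's original argument'': at that point the proposal becomes circular, because the Crux Lemma \emph{is} Gordon's theorem in its essential case, not a lemma you have established. Two further steps are also asserted rather than proved: the reduction to $\mathfrak{s}=\mathfrak{s}_{nc}$ (this one is repairable --- a Levi factor of $\mathfrak{h}$ surjecting onto $\mathfrak{s}_{nc}$ contains a graph of a homomorphism $\mathfrak{s}_{nc}\to\mathfrak{s}_c$, which must vanish), and the final promotion from $\mathfrak{a}^{\flat}\oplus\mathfrak{n}\subseteq\mathfrak{h}$ to $\mathfrak{s}_{nc}\subseteq\mathfrak{h}$. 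Note that $\mathfrak{a}^{\flat}\oplus\mathfrak{n}$ is itself a subalgebra (the Iwasawa $\mathfrak{a}\mathfrak{n}$), so containing it generates nothing further, and proper semisimple subalgebras of full real rank do exist (e.g.\ $\mathfrak{sl}(2,\R)\oplus\mathfrak{sl}(2,\R)$ inside $\mathfrak{sp}(4,\R)$); ruling these out requires using transitivity again, by an argument that is missing. In short: correct skeleton, but the heart of the theorem is left unproven and is ultimately cited rather than shown.
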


With this theorem at hand, we can prove Proposition \ref{p:BL}.

\begin{proof}[Proof of Proposition \ref{p:BL}]
By Proposition \ref{p:PSL(2)_kAffS1}, both $\mathrm{PSL}(2)_k$ and $\mathrm{PSL}(2)_{k'}$ may be made isometric to $\Aff \times \T^1$. Thus there exist left-invariant Riemannian distances, say $d_k$ and $d_{k'}$ on $\Aff \times \T^1$, as well as $d$ on $\mathrm{PSL}(2)_k$ and $d'$ on $\mathrm{PSL}(2)_{k'}$ such that $(\mathrm{PSL}(2)_k,d)$ is isometric to $( \Aff\times \T^1,d_k)$ and $(\mathrm{PSL}(2)_{k'},d')$ is isometric to $(\Aff\times \T^1,d_{k'})$. Since $d_k$ and $d_{k'}$ are bi-Lipschitz equivalent, it follows that $\mathrm{PSL}(2)_k$ and $\mathrm{PSL}(2)_{k'}$ are bi-Lipschitz equivalent.

Next we show that $\mathrm{PSL}(2)_k$ and $\mathrm{PSL}(2)_{k'}$ cannot be made isometric. For $k\in \N$, we fix a left-invariant Riemannian distance $d_G$ on $G:= \mathrm{PSL}(2)_k$ and we let $A$ be the isometry group of $(G,d_G)$. Then $A= GK$ as in Theorem \ref{t:Gordon}, with $K = \mathrm{Stab}(e)\cap A$, where $\mathrm{Stab}(e)$ denotes the stabilizer of the identity in $G$. Since $G$ is simple, its radical is trivial and hence nilpotent and moreover, $G_{nc}=G$. It follows by  Theorem \ref{t:Gordon} that $G = G_{nc} = A_{nc}$. The same reasoning applies for $k'$ instead of $k$, so that we obtain $G'=A'_{nc}$ for $G'=\mathrm{PSL}(2)_{k'}$ and $A'$ the isometry group of $(G',d_{G'})$. Now if $(G,d_G)$ and $(G',d_{G'})$ were isometric, then $A$ would be isomorphic to $A'$ with an isomorphism given by conjugation via the isometry between $(G,d_G)$ and $(G',d_{G'})$. This would imply that
$\mathrm{PSL}(2)_k = A_{nc}$ is isomorphic to $A'_{nc}= \mathrm{PSL}(2)_{k'}$, which is possible only if $k=k'$ (otherwise the centers of $\mathrm{PSL}(2)_k$ and $\mathrm{PSL}(2)_{k'}$ have different cardinality and hence the groups cannot be isomorphic).
\end{proof}

     \subsection{Quasi-isometrically homeomorphic groups}

     We now consider multiply connected groups that are homeomorphic via a quasi-isometry but not bi-Lipschitz equivalent. The latter fact will be proved by contradiction: if there existed a bi-Lipschitz homeomorphism between the groups it would lift to a bi-Lipschitz homeomorphism of the universal covers according to Proposition \ref{p:lift}. We first recall some basics from covering theory.

     Assume that $G$ is a simply connected Lie group equipped with a left-invariant Riemannian metric $g$. If $N$ is a discrete normal subgroup of $G$, then $G/N$ is a connected Lie group which admits a unique left-invariant Riemannian metric $g_{G/N}$ so that $\pi: (G,g) \to (G/N,g_{G/N})$ becomes a Riemannian covering, that is, a covering map which is locally isometric.

     \begin{prop}\label{p:lift}
     For $i\in \{1,2\}$, let $G_i$ be a simply connected Lie group endowed with a left-invariant Riemannian distance and  let $\pi_i: (G_i,g_i) \to (G_i/N_i,g_{G_i/N_i})$ be a Riemannian covering as above. Then every bi-Lipschitz homeomorphism $f: G_1/N_1 \to G_2/N_2$ lifts to a bi-Lipschitz homeomorphism $\widetilde{f}: G_1 \to G_2$, where `bi-Lipschitz' refers to the Riemannian distances induced by the respective Riemannian metrics.
     \end{prop}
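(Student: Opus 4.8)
The plan is to first produce the lift $\widetilde f$ as a map of topological spaces by covering-space theory, and only afterwards to verify that it is bi-Lipschitz by a length-comparison argument exploiting that the two covering projections are local isometries. To construct $\widetilde f$, I would note that since $G_1$ is simply connected and $\pi_2\colon G_2\to G_2/N_2$ is a covering map, the lifting criterion applies to the continuous map $f\circ\pi_1\colon G_1\to G_2/N_2$ (the obstruction lives in the image of $\pi_1(G_1)=0$) and yields a continuous lift $\widetilde f\colon G_1\to G_2$ with $\pi_2\circ\widetilde f=f\circ\pi_1$, unique once $\widetilde f(p_0)$ is fixed in the fibre over $f(\pi_1(p_0))$ for a chosen basepoint $p_0$. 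Applying the same to the homeomorphism $f^{-1}$ produces a lift $\widetilde g\colon G_2\to G_1$ of $f^{-1}\circ\pi_2$; normalizing by $\widetilde g(\widetilde f(p_0))=p_0$, the composite $\widetilde g\circ\widetilde f$ is a lift of $\pi_1$ through $\pi_1$ fixing $p_0$, hence a deck transformation with a fixed point, hence the identity, since deck transformations act freely. Symmetrically $\widetilde f\circ\widetilde g=\id$, so $\widetilde f$ is a homeomorphism with inverse $\widetilde g$, and one checks directly that $\pi_1\circ\widetilde g=f^{-1}\circ\pi_2$.

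The metric estimate is the core of the argument, and here I would use three standard facts about the geodesic Riemannian distances involved: a left-invariant metric on a Lie group is complete, so $(G_i,d_{G_i})$ and the quotients $(G_i/N_i,d_{G_i/N_i})$ are geodesic length spaces; a Riemannian covering is a local isometry and therefore preserves the length of every rectifiable curve and admits unique length-preserving path lifts; and an $L$-Lipschitz map between length spaces stretches the length of any rectifiable curve by at most the factor $L$. Writing $L$ for a bi-Lipschitz constant of $f$, I would take $p,q\in G_1$ together with a minimizing geodesic $\gamma$ from $p$ to $q$ (which exists by Hopf--Rinow), of length $d_{G_1}(p,q)$. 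Projecting by the local isometry $\pi_1$ preserves this length, applying $f$ multiplies it by at most $L$, and lifting the resulting curve $f\circ\pi_1\circ\gamma$ through $\pi_2$ from the initial point $\widetilde f(p)$ again preserves length; by uniqueness of path lifts this lift is exactly $\widetilde f\circ\gamma$, which therefore joins $\widetilde f(p)$ to $\widetilde f(q)$ and has length at most $L\,d_{G_1}(p,q)$. Hence $d_{G_2}(\widetilde f(p),\widetilde f(q))\leq L\,d_{G_1}(p,q)$. Running the identical argument for the lift $\widetilde g=\widetilde f^{-1}$ of the $L$-Lipschitz map $f^{-1}$ gives the reverse inequality $d_{G_1}(p,q)\leq L\,d_{G_2}(\widetilde f(p),\widetilde f(q))$, so $\widetilde f$ is bi-Lipschitz with the same constant $L$.

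I expect the main obstacle to lie in the metric bookkeeping rather than in the topological lifting. One must ensure the length-comparison is applied to genuinely rectifiable curves, which is why the argument reduces to a minimizing geodesic upstairs and uses length-preserving path lifts, instead of attempting to differentiate the merely continuous map $\widetilde f$; and one must track basepoints carefully so that $\widetilde g$ really is the inverse of $\widetilde f$ and the relation $\pi_1\circ\widetilde f^{-1}=f^{-1}\circ\pi_2$ holds, since these are exactly what permit the clean transfer of the single constant $L$ from $f$ to $\widetilde f$. A minor preliminary point worth recording is that $N_i$ acts on $(G_i,g_i)$ by left translations, which are isometries of the left-invariant metric, so that $(G_i/N_i,d_{G_i/N_i})$ is indeed the metric quotient and the projections $\pi_i$ are local isometries as required.
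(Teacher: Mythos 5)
Your proof is correct and follows essentially the same route as the paper's: obtain the topological lift $\widetilde f$ from standard covering-space theory, then upgrade to a global bi-Lipschitz estimate using that $\pi_1,\pi_2$ are local isometries and the spaces are geodesic. Your metric step---pushing a minimizing geodesic down through $\pi_1$, across through $f$, and lifting through $\pi_2$ with uniqueness of path lifts---is just a more explicit rendering of the paper's observation that $\widetilde f$ is uniformly locally bi-Lipschitz between geodesic spaces, and it has the minor added benefit of transferring the constant $L$ unchanged.
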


     \begin{proof}
     Let $f: G_1/N_1 \to G_2/N_2$ be bi-Lipschitz. Since $f$ is a homeomorphism and $G_1$ is simply connected, the composition $f\circ \pi_1: G_1 \to G_2/N_2$ is a universal cover of $G_2/N_2$, as is the map $\pi_2: G_2 \to G_2/N_2$. It follows from the uniqueness theorem for universal covers, see for instance \cite[Corollary 13.6]{fulton2013algebraic} or  \cite[I, \Sec 11]{MR1920389}, that there exists a homeomorphism $\widetilde{f}: G_1 \to G_2$ with $\pi_2 \circ \widetilde{f} = f \circ \pi_1$. Since $f$ is bi-Lipschitz and $\pi_1$, $\pi_2$ are local isometries, the map $\widetilde{f}$ is uniformly locally bi-Lipschitz, as is its inverse. Finally, since $G_1$ and $G_2$ are geodesic, $\widetilde{f}$ is bi-Lipschitz as claimed.
     \end{proof}

     \begin{prop}\label{p:QIH}
    Each of the following pairs consists of quasi-isometrically homeomorphic groups that are not bi-Lipschitz equivalent:
     \begin{enumerate}
     \item\label{QIH1} $(\R^2 \times \T^1,\mathrm{N}_3^{\ast}(\R))$
     \item\label{QIH2} $(\mathrm{SE}(2)_k,\mathrm{N}_3^{\ast}(\R))$, for every $k\in \N$.
     \end{enumerate}
     \end{prop}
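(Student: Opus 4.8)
The plan is to handle both items by reducing everything to the single pair $(\R^2\times\T^1,\mathrm{N}_3^{\ast}(\R))$ and then transporting the conclusions to $\mathrm{SE}(2)_k$ via item \eqref{I2} of Proposition \ref{p:I}. I first produce a quasi-isometric homeomorphism between $\R^2\times\T^1$ and $\mathrm{N}_3^{\ast}(\R)$. Both groups are diffeomorphic to $\R^2\times(\R/\Z)$: for $\R^2\times\T^1$ this is the definition, while $\mathrm{N}_3^{\ast}(\R)=\mathrm{N}_3(\R)/\langle(0,0,1)\rangle$ acquires coordinates $(x,y,z)$ with the central coordinate $z$ read modulo the integer lattice. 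I take the map $\Phi$ that is the identity in these coordinates, which is manifestly a homeomorphism. To see that $\Phi$ is a quasi-isometry, I fix left-invariant Riemannian distances on both sides and compare each to the Euclidean distance $\|(x,y)-(x',y')\|$ of the projection onto the base $\R^2$ (the abelianization in the Heisenberg case). On each group the projection to $\R^2$ is $1$-Lipschitz up to a fixed constant, which gives the lower bound $d\gtrsim\|(x,y)-(x',y')\|$; and since the remaining circle/central direction is compact, any two points can be joined by a horizontal path of length $\approx\|(x,y)-(x',y')\|$ followed by a bounded correction in the compact fibre. In $\mathrm{N}_3^{\ast}(\R)$ closing a central gap of size at most $1$ requires a horizontal loop of length $\lesssim 1$ by the isoperimetric behaviour of the Heisenberg group. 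Hence on both groups $\|(x,y)-(x',y')\|\lesssim d\lesssim\|(x,y)-(x',y')\|+C$, so $\Phi$ is a quasi-isometry and a homeomorphism.

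Next I show that $\R^2\times\T^1$ and $\mathrm{N}_3^{\ast}(\R)$ are \emph{not} bi-Lipschitz equivalent, arguing by contradiction. Following the strategy announced before the statement, a bi-Lipschitz equivalence is in particular a bi-Lipschitz homeomorphism $f:\R^2\times\T^1\to\mathrm{N}_3^{\ast}(\R)$. The universal cover of $\R^2\times\T^1$ is $\R^3$, which is abelian so its lifted left-invariant metric is flat, while the universal cover of $\mathrm{N}_3^{\ast}(\R)$ is the Heisenberg group $\mathrm{N}_3(\R)$ with a left-invariant metric. By Proposition \ref{p:lift}, $f$ lifts to a bi-Lipschitz homeomorphism $\widetilde{f}:\R^3\to\mathrm{N}_3(\R)$. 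But $\R^3$ has polynomial volume growth of degree $3$, whereas $\mathrm{N}_3(\R)$ has polynomial growth of degree $4$ by the Bass--Guivarc'h formula; since every bi-Lipschitz map is a quasi-isometry and the degree of polynomial growth is a quasi-isometry invariant, no such $\widetilde{f}$ can exist. This contradiction establishes item \eqref{QIH1}.

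Finally I deduce item \eqref{QIH2} from the pair already treated. By item \eqref{I2} of Proposition \ref{p:I}, the group $\mathrm{SE}(2)_k$ may be made isometric to $\R^2\times\T^1$; in particular there is a quasi-isometric homeomorphism between them and they are bi-Lipschitz equivalent. Composing that homeomorphism with $\Phi$ yields a quasi-isometric homeomorphism $\mathrm{SE}(2)_k\to\mathrm{N}_3^{\ast}(\R)$, giving the first assertion. For the bi-Lipschitz claim, if $\mathrm{SE}(2)_k$ and $\mathrm{N}_3^{\ast}(\R)$ were bi-Lipschitz equivalent, then by transitivity of bi-Lipschitz equivalence $\R^2\times\T^1$ and $\mathrm{N}_3^{\ast}(\R)$ would be as well, contradicting the previous paragraph. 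The main obstacle is the non-bi-Lipschitz direction, and its crux is the lifting step: the point is that bi-Lipschitz data ascends along the Riemannian coverings (Proposition \ref{p:lift}), converting a statement about the compact-fibre quotients $\R^2\times\T^1$ and $\mathrm{N}_3^{\ast}(\R)$, which are large-scale indistinguishable, into a comparison of the genuinely different geometries of $\R^3$ and $\mathrm{N}_3(\R)$ at the level of universal covers.
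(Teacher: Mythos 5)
Your proposal is correct and follows essentially the same route as the paper: the identity map in the coordinates $\R^2\times(\R/\Z)$ as the quasi-isometric homeomorphism, the lift to universal covers via Proposition \ref{p:lift}, the volume-growth obstruction ($3$ versus $4$) between $\R^3$ and $\mathrm{N}_3(\R)$, and the reduction of the $\mathrm{SE}(2)_k$ case to the first pair through Proposition \ref{p:I}\eqref{I2}. The only cosmetic difference is that you justify the quasi-isometry by a direct comparison of both distances with the Euclidean distance of the $\R^2$-projection, whereas the paper phrases the same fact via a common cocompact $\Z^2$ subgroup; both are valid.
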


      \begin{proof}
      Once we know that $\R^2 \times \T^1$ and $\mathrm{N}_3^{\ast}(\R)$ are equivalent via a quasi-isometric but not a bi-Lipschitz homeomorphism, the same statements follow for
      $SE(2)_k$ and $\mathrm{N}_3^{\ast}(\R)$ by Proposition \ref{p:I}, \eqref{I2}. Thus it suffices to prove Part \eqref{QIH1} of Proposition \ref{p:QIH}.

      \medskip

In order to show that
the groups $\mathrm{N}_3^{\ast}(\mathbb{R})$ and $\mathbb{R}^2 \times \mathbb{T}^1$ are quasi-isometric via a homeomorphism,
it is convenient to choose, as we may, coordinates $(x,y,z)$ on $\mathrm{N}_3(\R)$ so that
for all $(x,y,z)$ and $(x',y',z')$, we have
\begin{displaymath}
(x,y,z) \cdot (x',y',z')= (x + x', y + y', z + z' + 2 y  {x'} - 2 x  {y'}).
\end{displaymath}
Without loss of generality we may assume that $\mathrm{N}_3^{\ast}(\mathbb{R})$ is the quotient of
$\mathrm{N}_3(\R)$ by the element
$Z=(0,0,1)$. The Lie group $\mathrm{N}_3(\R) /\langle Z \rangle$ is diffeomorphic to  $\mathbb{R}^2 \times \mathbb{T}^1$.
We see that $\Z^2$ can be identified with a subgroup of the groups $\mathrm{N}_3(\R) /\langle Z \rangle$ and $\mathbb{R}^2 \times \mathbb{T}^1$, respectively, which in both cases acts co-compactly. Moreover, for these particular models, the identity map of $\R^2 \times \mathbb{T}^1$ provides a quasi-isometric homeomorphism between $\mathrm{N}_3^{\ast}(\mathbb{R})$ and $\mathbb{R}^2 \times \mathbb{T}^1$.

Assume towards a contradiction that there exists a biLipschitz map $f: \mathbb{R}^2 \times \mathbb{T}^1 \to \mathrm{N}_3^{\ast}(\mathbb{R})$.
It  follows from Proposition \ref{p:lift} that there would exists a bi-Lipschitz homeomorphism $\widetilde{f}: \R^3 \to \mathrm{N}_3(\mathbb{R})$.
This is known to be false, for instance because $\R^3$ has volume growth of order $3$, whereas the volume of balls in $\mathrm{N}_3(\mathbb{R})$ grows with order $4$ at large. We have thus proven that $\mathrm{N}_3^{\ast}(\mathbb{R})$ is not bi-Lipschitz equivalent to
$\mathbb{R}^2 \times \mathbb{T}^1$.
\end{proof}

     \subsection{Quasi-isometric groups}\label{s:QI}

     \begin{prop}\label{p:QI}
     Each of the following pairs consists of quasi-isometrically equivalent groups that are not equivalent via a quasi-isometric homeomorphism:
     \begin{enumerate}
     \item\label{QI1} $(G,H)$ for distinct $G,H \in \{\T^1, \T^2, \T^3, \mathrm{SU}(2), \mathrm{SO}(3)\}$
     \item\label{QI2} $(G,H)$ for distinct $G,H \in \{\R, \R\times \T^1, \R\times \T^2\}$
     \item\label{QI3} $(\R^2,\R^2 \times \T^1)$
     \item\label{QI4} $(\Aff, \Aff\times \T^1)$
     \item\label{QI5} $(\R^2,\mathrm{N}_3^{\ast}(\R))$
      \item\label{QI6} $(\R^2,\mathrm{SE}(2)_k)$, for every $k\in \N$
     \item\label{QI7} $(\Aff,\mathrm{PSL}(2)_k)$, for every $k\in \N$.
     \end{enumerate}
     \end{prop}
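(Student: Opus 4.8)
The plan is to treat the two assertions of the proposition—that each listed pair is quasi-isometric, and that no pair admits a quasi-isometric homeomorphism—by separate arguments, reducing the second to a topological obstruction. For the quasi-isometries I would use the elementary fact that whenever $K$ is a compact Lie group the coordinate projection $G\times K\to G$ is a quasi-isometry (its fibres have uniformly bounded diameter), together with transitivity of the quasi-isometry relation. This settles each pair: in \eqref{QI1} all five groups are compact, hence bounded, hence pairwise quasi-isometric (indeed each is quasi-isometric to a point); in \eqref{QI2} both $\R\times\T^1$ and $\R\times\T^2$ are quasi-isometric to $\R$; in \eqref{QI3} and \eqref{QI4} the compact factor $\T^1$ makes $\R^2\times\T^1$ quasi-isometric to $\R^2$ and $\Aff\times\T^1$ quasi-isometric to $\Aff$; in \eqref{QI5} one has $\R^2$ quasi-isometric to $\R^2\times\T^1$, which by Proposition \ref{p:QIH} is quasi-isometric to $\mathrm{N}_3^{\ast}(\R)$; in \eqref{QI6} Proposition \ref{p:I}\eqref{I2} makes $\mathrm{SE}(2)_k$ isometric to $\R^2\times\T^1$, hence quasi-isometric to $\R^2$; and in \eqref{QI7} Proposition \ref{p:I}\eqref{I5} makes $\mathrm{PSL}(2)_k$ isometric to $\Aff\times\T^1$, which is quasi-isometric to $\Aff$.

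For the absence of a quasi-isometric homeomorphism I would observe that any such map is, in particular, a homeomorphism between the underlying manifolds; it therefore suffices to show that in each pair the two groups are not homeomorphic. The basic invariant is topological dimension, which a homeomorphism preserves by invariance of dimension: the two members of every pair in \eqref{QI2}--\eqref{QI7} have different dimensions ($1$, $2$, $3$ among the groups of \eqref{QI2}, and dimension $2$ versus dimension $3$ throughout \eqref{QI3}--\eqref{QI7}), and the same count handles all pairs of \eqref{QI1} except those formed from two three-dimensional compact groups.

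The only pairs not covered by the dimension argument are those in \eqref{QI1} with both members drawn from $\{\T^3,\mathrm{SU}(2),\mathrm{SO}(3)\}$. Here I would separate the groups by their fundamental groups, using the diffeomorphisms $\mathrm{SU}(2)\cong S^3$ and $\mathrm{SO}(3)\cong\mathbb{RP}^3$: one computes $\pi_1(\T^3)\cong\Z^3$, $\pi_1(\mathrm{SU}(2))\cong\{0\}$ and $\pi_1(\mathrm{SO}(3))\cong\Z/2\Z$, which are pairwise non-isomorphic, so no two of these closed $3$-manifolds are homeomorphic. I do not anticipate a genuine difficulty in this proposition: the whole argument hinges on the single remark that a quasi-isometric homeomorphism must be a homeomorphism, after which standard topological invariants—dimension and the fundamental group—conclude. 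The only points requiring care are the routine bookkeeping of the quasi-isometries through the compact-factor principle and transitivity, and recalling the manifold types of $\mathrm{SU}(2)$ and $\mathrm{SO}(3)$ for the fundamental-group computation.
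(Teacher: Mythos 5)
Your proposal is correct and follows essentially the same route as the paper: quasi-isometric equivalence via the compact-factor principle and transitivity through Propositions \ref{p:I} and \ref{p:QIH}, and non-existence of quasi-isometric homeomorphisms by showing the underlying manifolds are not homeomorphic. The paper merely lists the diffeomorphism types and calls the groups ``topologically distinct,'' whereas you spell out the invariants (dimension and fundamental group) that justify this; the content is the same.
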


      \begin{proof}

       The groups appearing on the same line in Proposition \ref{p:QI} are topologically distinct and hence cannot be equivalent via a quasi-isometric homeomorphism. Indeed, denoting by ``$\simeq$'' equivalence via a diffeomorphism of manifolds, we have:
       \begin{enumerate}
     \item $\T^1\simeq \S^1$, $\T^2 \simeq \S^1 \times \S^1$, $\T^3 \simeq \S^1 \times \S^1 \times \S^1$, $\mathrm{SU}(2)\simeq \S^3$, $\mathrm{SO}(3)\simeq P\R^3$
     \item $\R$, $\R\times \T^1\simeq \R \times \S^1$, $\R\times \T^2\simeq \R \times \S^1\times \S^1$
     \item $\R^2$ and $\R^2 \times \T^1\simeq \R^2 \times \S^1$
     \item $\Aff\simeq \R^2$ and $\Aff\times \T^1\simeq \R^2 \times \S^1$
     \item $\R^2$ and $\mathrm{N}_3^{\ast}(\R)\simeq \R^2 \times \S^1$
      \item $\R^2$ and $\mathrm{SE}(2)_k\simeq \R^2 \times \S^1$
     \item $\Aff\simeq \R^2$ and $\mathrm{PSL}(2)_k\simeq \R^2 \times \S^1$.
     \end{enumerate}

     \medskip

     It remains to show that groups appearing on the same line are quasi-isometrically equivalent, even if they are not homeomorphic. First, the groups $\mathbb{T}^1$, $\mathbb{T}^2$, $\mathbb{T}^3$, $\mathrm{SU}(2)$, and  $\mathrm{SO}(3)$ are trivially quasi-iso{\-}metri{\-}cally equivalent because they are compact.

Second, the groups $\mathbb{R}$, $\mathbb{R}\times \mathbb{T}^1$,  and $\mathbb{R}\times \mathbb{T}^2$ are clearly quasi-isometrically equivalent. More generally,  $\mathbb{R}\times K$ is quasi-isometric to $\mathbb{R}\times K'$ for arbitrary compact Lie groups $K$ and $K'$, as one can see by arguing componentwise.
For the same reason, $\mathbb{R}^2$ and $\mathbb{R}^2 \times \mathbb{T}^1$ are quasi-isometrically equivalent, and so are $\Aff$ and $ \Aff\times \T^1$.
Having established the quasi-isometric equivalence in the cases \eqref{QI1} -- \eqref{QI4}, the remaining cases follow by transitivity. Indeed, the information from Propositions~\ref{p:QIH}, \ref{p:I}, and \ref{p:PSL(2)_kAffS1} can be used to deduce that the groups
      in \eqref{QI5}, \eqref{QI6}, and \eqref{QI7} are quasi-isometrically equivalent, once this has been established for the groups in \eqref{QI3} and \eqref{QI4}.

     \end{proof}

\section{Conclusion of the quasi-isometric classification}\label{s:conclusion}

In Section \ref{s:relations} we have identified pairs of Riemannian Lie groups that are quasi-isometrically equivalent. In this section we show that all remaining pairs of at most three-dimensional connected Lie groups are quasi-isometrically distinct, thus establishing Theorem \ref{t:QIClass}. The proof uses the following quasi-isometric invariants of connected Riemannian Lie groups:

\begin{itemize}
\item \emph{degree of polynomial volume growth}
\item \emph{polynomial volume growth} (or equivalently by \cite{MR0369608,MR0316625}: type R)
\item \emph{Gromov hyperbolicity} \cite{MR919829}, see also e.g.\ \cite[Theorem 3.1.11]{Pajot2017}
\item \emph{covering dimension of asymptotic cones} \cite{MR2399134}.
\end{itemize}

Besides these general quasi-invariants, we also rely on quasi-isometric classification results for connected Riemannian Lie groups of a specific form:

\begin{itemize}
\item for Gromov hyperbolic connected Riemannian Lie groups (which are proper metric spaces): \emph{topology of the boundary} \cite{MR919829}, see also e.g.\ \cite[Proposition 2.20]{MR1921706}
\item for simply connected Riemannian manifolds of negative or zero curvature: \emph{$L^p$ cohomology} \cite{MR1253544}
\item \cite[Corollaire 1]{Pansu} and \cite[Corollary 1.3]{MR3180486} for $\R^n \rtimes_A \R$ with $A\in\mathrm{Mat}(n\times n)$ having only eigenvalues with positive real parts\\
(in our notation this applies to: $J$, $D_{\lambda}$ for $0<\lambda\leq 1$, $C_{\lambda}$ for $\lambda>0$)
\item \cite[Theorem 1.3]{MR2925383} for $\mathrm{Sol}(m,n)$, the solvable Lie groups $\R^2  \rtimes \R$, where $\R$ acts by $z\cdot (x,y)= (e^{mz}x,e^{-nz}y)$, for  {$m>n>0$} using \emph{coarse differentiation}\\
(in our notation: $\mathrm{Sol}(1,-\lambda)=D_{\lambda}$  {for $-1<\lambda<0$})
\end{itemize}

 \begin{proof}[Proof of Theorem \ref{t:QIClass}]
We first discuss why the listed classes are quasi-iso{\-}metri{\-}cally distinct.

\medskip

\emph{The groups in classes (1) -- (5)} are the only groups of type R, as can be seem from an explicit description of the Bianchi classification of Lie algebras, as given for instance in \cite[Chapter 7, \Sec 1.1]{gorbatsevich1994lie}.
The individual classes are divided according to the degree $d\in \{0,1,2,3,4\}$ of polynomial volume growth.

\medskip

\emph{The groups in classes (6) and $(7_{\lambda})$} have exponential growth but are not Gromov hyperbolic: for the groups in class (6) this is easy to see since $\AffR$ can be endowed with a left-invariant Riemannian metric such that it contains an isometrically embedded copy of $\R^2$. The proof that $D_{\lambda}$ is not hyperbolic for $\lambda <0$ is given below in Proposition \ref{p:L(3,2,-1<eta<0)}.

We now show that (6) and $(7_{\lambda})$ are distinct classes. The group $\AffR$ is not quasi-iso{\-}me{\-}tri{\-}cally equivalent to any $D_{\lambda}$ since the covering dimension of the asymptotic cone of $D_{\lambda}$ is  $1$ for every $\lambda$, while $\AffR$ has cone dimension $2$ by \cite[Theorem 1.1]{MR2399134}.

To distinguish the classes $(7_{\lambda})$  for different values of $\lambda \in [-1,0)$, take $-1\leq \lambda_1 < \lambda_2<0$. If $\lambda_1\neq -1$, then $D_{\lambda_1}$ is not quasi-isometric to $D_{\lambda_2}$ by \cite[Theorem 1.3]{MR2925383}. If $\lambda_1=-1$, then $D_{\lambda_1}= D_{-1}$ is the Lie group of the  Solv geometry, which by \cite[Section 2]{Lee2008} and \cite[Section 3]{Brady01072001} admits a cocompact lattice of the form $\mathbb{Z} \ltimes \mathbb{Z}^2$, while there does not exist any finitely generated group quasi-isometric to $D_{\lambda_2}$ by \cite[Theorem 1.2]{MR2925383}.

\medskip

\emph{The groups in classes (8) -- $(11_{\lambda})$} are Gromov hyperbolic: since $\Aff$, $J$, and $D_\lambda$ for $\lambda \in (0,1]$ are all of the form $\R^n \rtimes_A \R$ for a matrix $A$ whose eigenvalues all have positive real parts, it follows from \cite[Theorem 3]{MR0353210}  that each of these groups admits a left-invariant Riemannian metric with negative sectional curvature bounded away from zero. Finally, a simply connected complete Riemannian manifold with negative curvature bounded away from zero is Gromov hyperbolic, see for instance \cite[p.52, Corollaire 10]{MR1086648}. While the groups in (8) have $\S^1$ as visual boundary, the groups in (9)-$(11_{\lambda})$ have $\S^2$.

All groups $J$, $D_{\lambda}$ ($\lambda \in (0,1]$)  are of the form $\mathbb{R}^2 \rtimes_A \mathbb{R}$ with $A$  {equal to} $\begin{psmallmatrix}1&1\\0&1\end{psmallmatrix}$ or $\begin{psmallmatrix}1&0\\ {0}&\lambda\end{psmallmatrix}$, $\lambda \in (0,1]$.
It is a special case of  \cite[Corollaire 1]{Pansu}, proved by means of $L^p$ cohomology, that two groups in the family $D_{\lambda}$, $D_{\lambda'}$, $\lambda,\lambda' \in (0,1]$ are quasi-isometrically equivalent if and only if they are isomorphic, that is, if and only if $\lambda = \lambda'$.
The quasi-isometric classification of all negatively curved $\R^n \rtimes \R$ has been completed in \cite{MR3180486}.
As a special case of \cite[Corollary 1.3]{MR3180486}, if $A$ and $B$ are $2\times 2$ matrices whose eigenvalues all have positive real parts, then the two groups $\mathbb{R}^2 \rtimes_A \mathbb{R}$ and $\mathbb{R}^2 \rtimes_B \mathbb{R}$ are quasi-isometric if and only if there exists $s>0$ such that  {$A$ and $sB$ have same real part Jordan form. This shows in particular that $J$ cannot be quasi-isometric to any $D_{\lambda}$, $\lambda \in (0,1]$, and $D_{\lambda}$ is quasi-isometric to $D_{\lambda'}$ only if $\lambda = \lambda'$.}

\medskip

Except for $(7_{\lambda})$ and $(11_{\lambda})$, which represent uncountably many different classes, all the groups listed on one line in Table \ref{t:QIclass} are quasi-isometrically equivalent: this follows from Propositions \ref{p:I}, \ref{p:BL}, \ref{p:QIH}, and \ref{p:QI}.
\end{proof}

We now discuss the proof of one result which has been used in the quasi-isometric classification.

\begin{prop}\label{p:L(3,2,-1<eta<0)}
The Lie groups $D_{\lambda}$, $\lambda \in [-1,0)$, are not Gromov hyperbolic.
\end{prop}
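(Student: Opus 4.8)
The plan is to detect non-hyperbolicity through the filling (isoperimetric) function, rather than by exhibiting a flat. First I would fix the left-invariant metric \(g=e^{-2z}\,dx^2+e^{2\mu z}\,dy^2+dz^2\) on \(D_\lambda\), where \(\mu:=-\lambda\in(0,1]\), with orthonormal coframe \(\theta^1=e^{-z}dx\), \(\theta^2=e^{\mu z}dy\), \(\theta^3=dz\) and dual frame \(E_1,E_2,E_3\). I would point out why the argument used for class (6) is unavailable here: the only abelian \(2\)-plane in the Lie algebra is the normal \(\R^2=\operatorname{span}\{E_1,E_2\}\), and since \(E_1\) is an expanding direction a short computation with the Koszul formula gives \(\langle\nabla_{E_1}E_1,E_3\rangle=1\neq 0\), so this plane is never totally geodesic; thus, in contrast to \(\AffR=\H^2\times\R\), there is no isometrically embedded Euclidean plane to exploit. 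Instead I would use the standard fact that a simply connected Riemannian manifold is Gromov hyperbolic if and only if it satisfies a \emph{linear} isoperimetric (filling-area) inequality (see e.g.\ Bridson--Haefliger). It therefore suffices to produce loops \(\gamma_T\) whose minimal filling area grows super-linearly in their length.

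For the construction I would take the ``box'' loops \(\gamma_T\) tracing the boundary of the square \([0,e^T]\times[0,e^T]\) in the \((x,y)\)-plane, performing the two \(x\)-moves at height \(z=+T\) and the two \(y\)-moves at height \(z=-T\) (where the respective directions are contracted), joined by four vertical segments. A direct computation of the projected signed area gives
\[
\oint_{\gamma_T} x\,dy = e^{2T}.
\]
For \(\lambda=-1\), i.e.\ \(\mu=1\), the key point is that \(\omega:=dx\wedge dy=\theta^1\wedge\theta^2\) has comass exactly \(1\) everywhere and is exact, \(\omega=d(x\,dy)\). Hence for any surface \(\Sigma\) filling \(\gamma_T\), Stokes' theorem together with the calibration inequality yields \(\operatorname{Area}(\Sigma)\ge\int_\Sigma\omega=\oint_{\gamma_T}x\,dy=e^{2T}\), while \(\ell(\gamma_T)\approx 8T\). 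Thus the filling area is exponential in the length and \(D_{-1}=\Sol\) is not Gromov hyperbolic.

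The main obstacle is the general case \(\lambda\in(-1,0)\). There the same loops still satisfy \(\oint_{\gamma_T}x\,dy=e^{2T}\) with length \(\ell(\gamma_T)\approx e^{(1-\mu)T}\), which would already force a super-linear (indeed \(\gtrsim\ell^{2/(1-\mu)}\)) filling bound; but now the comass of \(dx\wedge dy\) equals \(e^{(1-\mu)z}\), which is unbounded for \(z>0\), so the calibration inequality \(\operatorname{Area}(\Sigma)\ge\int_\Sigma\omega\) fails as soon as \(\Sigma\) climbs to large height, and fillings genuinely gain by doing so. Certifying the super-linear lower bound in this anisotropic regime is the crux. I would resolve it either by an adapted, height-confined calibration combined with a coarea/sweep-out estimate controlling the contribution of the region \(\{z>0\}\), or by invoking the known exponential Dehn function of the Sol-type groups \(\R^2\rtimes_A\R\) whose defining matrix \(A\) has eigenvalues of opposite sign. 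In either case, a super-linear isoperimetric function contradicts the linear isoperimetric inequality satisfied by Gromov hyperbolic spaces, establishing that \(D_\lambda\) is not Gromov hyperbolic for every \(\lambda\in[-1,0)\).
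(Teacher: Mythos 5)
Your route (linear isoperimetric inequality for hyperbolic spaces, violated by box loops with large projected area) is genuinely different from the paper's, which instead quasi-isometrically embeds $D_\lambda$ into $\H^2\times\H^2$ and exhibits two $(L,C)$-quasi-geodesics between the same endpoints that drift arbitrarily far apart, contradicting the Morse lemma. The paper explicitly mentions the Dehn-function route as an alternative but does not follow it, precisely because carrying it out uniformly in $\lambda$ is harder. Your argument is complete and correct for $\lambda=-1$: there $dx\wedge dy=\theta^1\wedge\theta^2$ is a unit-comass exact calibration, the loops $\gamma_T$ have length $\approx 8T$ and enclosed flux $e^{2T}$, and Stokes plus the comass bound gives an exponential filling lower bound.

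The genuine gap is the case $\lambda\in(-1,0)$, which is most of the parameter range. As you yourself note, the comass of $dx\wedge dy$ equals $e^{(1-\mu)z}$ and is unbounded on $\{z>0\}$, so the inequality $\operatorname{Area}(\Sigma)\ge\int_\Sigma\omega$ is simply false for surfaces that climb, and a filling can in principle exploit this; the calibration certificate evaporates exactly where it is needed. Neither of your two proposed repairs is carried out: the ``height-confined calibration plus coarea/sweep-out'' is not specified (what form replaces $\omega$ above $z=0$, and how the escaping flux is controlled is precisely the hard part), and ``invoking the known exponential Dehn function'' of $\R^2\rtimes_A\R$ with $A$ having eigenvalues of opposite sign is not a proof but a citation of a statement at least as strong as the one you are constructing machinery to establish --- and if you are willing to cite it, the entire box-loop construction becomes superfluous. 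So as written the proposal proves the proposition only for $\lambda=-1$. By contrast, the paper's quasi-geodesic argument uses only that the planes $\{y=\mathrm{const}\}$ are isometric copies of $\H^2$ and the planes $\{x=\mathrm{const}\}$ are homothetic copies of $\H^2$, which holds for every $\lambda\in[-1,0)$ with constants depending only on $\lambda$, so no case distinction or anisotropic filling estimate is needed. To salvage your approach you would either have to make the confined-calibration step precise, or commit to a concrete reference for the exponential (or at least super-linear) Dehn function of these non-unimodular $\Sol$-like groups.
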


There are different proofs available for this fact. One can show for instance that the Dehn function of $D_{\lambda}$, $\lambda \in [-1,0)$,
is exponential (the argument for $D_{-1}$ is outlined in \cite{MR3034292}), and then use a result by Gromov
\cite{MR919829} to deduce that $D_{\lambda}$, $\lambda \in [-1,0)$ is not Gromov-hyperbolic since the Dehn function is not linear.
 Another possibility would be to consider the asymptotic cone of $D_{\lambda}$, $\lambda \in [-1,0)$; see
 \cite{MR1709665} and references therein.
A proof for Proposition \ref{p:L(3,2,-1<eta<0)} is also  contained in  \cite[\Sec 3.1]{MR2925383},  {where it was observed that points in $D_{\lambda}$, $\lambda \in [-1,0)$,  which are not contained in the same hyperbolic plane can be joined by quasi-geodesics that do not lie close to each other. We recall the argument below.} It is convenient to think of the hyperbolic plane $\mathbb{H}^2$ not as the upper half plane $\{(u,v):\;v>0\}$ with the metric given by $$\mathrm{d}s^2=\frac{1}{v^2}(\mathrm{d}u^2 + \mathrm{d}v^2),$$ but rather to apply a coordinate transform $(x, {z})=(u,\log v)$. Then $\mathbb{H}^2$ can be seen as $\mathbb{R}^2$ equipped with the metric given by $\mathrm{d}s^2 = e^{-2z}\;\mathrm{d}x^2 + \mathrm{d}z^2$. It turns out that the groups $D_{\lambda}$, $\lambda \in [-1,0)$, are all foliated by isometrically embedded copies of $\mathbb{H}^2$. Perpendicular to these planes, there is another family of homothetically embedded `upside down' versions of $\mathbb{H}^2$.

\begin{proof}[Proof of Proposition \ref{p:L(3,2,-1<eta<0)} following \cite{MR2925383}]
Recall that  $D_{\lambda}$  is $\R^3$ with the group law
\begin{displaymath}
(x,y,z)\ast (x',y',z')=(x+e^z x', y + e^{\lambda z}y',z+z').
\end{displaymath}
Let $g_{\lambda}$ be the metric on $D_{\lambda}$ which makes the following left-invariant frame orthonormal:
\begin{displaymath}
E_1=e^z\partial_x,\quad E_2=e^{\lambda z}\partial_y,\quad E_3= -\partial_z.
\end{displaymath}
(Note that $\{E_1,E_2,E_3\}$ has structure constants as described in \eqref{eq:structure_constants}.)
The associated length element is given by
\begin{displaymath}
\mathrm{d}s^2 = e^{-2z}\;\mathrm{d}x^2 + e^{-2 \lambda z}\;\mathrm{d}y^2 + \mathrm{d}z^2.
\end{displaymath}
It follows that the planes $\{y=\mathrm{const}\}$ are isometrically embedded copies of $\H^2$, whereas the planes $\{x=\mathrm{const}\}$ are homothetically embedded copies of the reflected hyperbolic plane.

Consider two points $p_1=(x_1,y_1,z_1)$ and $p_2=(x_2,y_2,z_2)$ in $D_{\lambda}$ with $x_1\neq x_2$ and $y_1\neq y_2$. We will construct two quasi-geodesics $\gamma_a$ and $\gamma_b$ which connect $p_1$ and $p_2$ but do not lie close to each other. First, we let $\gamma_{a,1}$ be the geodesic segment between $p_1$ and $(x_2,y_1,z_2)$ inside the hyperbolic plane $\{y=y_1\}$. Then we let $\gamma_{a,2}$ be the geodesic segment in $\{x=x_2\}$ connecting the endpoint of $\gamma_{a,1}$ to $p_2$, and we denote by $\gamma_a$ the concatenation of $\gamma_{a,1}$ and $\gamma_{a,2}$. The curve $\gamma_b$ is obtained in an analogous way, by first connecting $p_1$ to $(x_1,y_2,z_2)$ by a geodesic segment in the plane $\{x=x_1\}$, and then connecting the point $(x_1,y_2,z_2)$ to $p_2$ by a geodesic in the hyperbolic plane $\{y=y_2\}$. Observe that the map
\begin{displaymath}
D_{\lambda} \to \H^2 \times \H^2,\quad (x,y,z)\mapsto ((x,z),(y,z))
\end{displaymath}
is a quasi-isometric embedding with constants depending only on the parameter $\lambda$ if $D_{\lambda}$ is endowed with the distance induced by $g_{\lambda}$ and $\H^2 \times \H^2$ is equipped with a product metric of $d_{\H^2}$, where $d_{\H^2}$ is induced by a metric of sectional curvature constant equal to $-1$. It follows that both $\gamma_a$ and $\gamma_b$ are $(L,C)$-quasi-geodesics, for constants $L=L(\lambda)\geq 1$ and $C=C(\lambda) \geq 0$ independent of $a$ and $b$. By applying this construction to a sequence of points $p_{1,n}=(x_{1,n},y_{1,n},z)$ and $p_{2,n}=(x_{2,n},y_{2,n},z)$, with $z\in \R$, $|x_{1,n}-x_{2,n}|\to \infty$ and  $|y_{1,n}-y_{2,n}|\to \infty$ as $n\to \infty$, we see that there does not exist a constant $\delta>0$ such that for every $n$, the curve $\gamma_{a}$ connecting $p_{1,n}$ to $p_{2,n}$ is contained in the $\delta$-neighborhood of $\gamma_b$. This proves that $(D_{\lambda},g_{{\lambda}})$ is not Gromov hyperbolic.
\end{proof}

\bibliographystyle{acm}
\bibliography{references}

\def\cprime{$'$} \def\cprime{$'$} \def\cprime{$'$}
\begin{thebibliography}{10}

\bibitem{MR2364699}
{\sc Abbaspour, H., and Moskowitz, M.}
\newblock {\em Basic {L}ie theory}.
\newblock World Scientific Publishing Co. Pte. Ltd., Hackensack, NJ, 2007.

\bibitem{bianchi1897sugli}
{\sc Bianchi, L.}
\newblock {\em Sugli spaz{\'\i} a tre dimensioni che ammettono un gruppo
  continuo di movimenti: memoria}.
\newblock 1897.

\bibitem{Brady01072001}
{\sc Brady, N.}
\newblock Sol geometry groups are not asynchronously automatic.
\newblock {\em Proceedings of the London Mathematical Society 83}, 1 (2001),
  93--119.

\bibitem{MR1709665}
{\sc Burillo, J.}
\newblock Dimension and fundamental groups of asymptotic cones.
\newblock {\em J. London Math. Soc. (2) 59}, 2 (1999), 557--572.

\bibitem{2017arXiv170509648C}
{\sc {Cowling}, M.~G., {Kivioja}, V., {Le Donne}, E., {Nicolussi Golo}, S., and
  {Ottazzi}, A.}
\newblock {From homogeneous metric spaces to Lie groups}.
\newblock {\em ArXiv e-prints\/} (May 2017).

\bibitem{MR2399134}
{\sc de~Cornulier, Y.}
\newblock Dimension of asymptotic cones of {L}ie groups.
\newblock {\em J. Topol. 1}, 2 (2008), 342--361.

\bibitem{MR1786869}
{\sc de~la Harpe, P.}
\newblock {\em Topics in geometric group theory}.
\newblock Chicago Lectures in Mathematics. University of Chicago Press,
  Chicago, IL, 2000.

\bibitem{epstein1992word}
{\sc Epstein, D.}
\newblock {\em Word Processing in Groups}.
\newblock Ak Peters Series. Taylor \& Francis, 1992.

\bibitem{MR2925383}
{\sc Eskin, A., Fisher, D., and Whyte, K.}
\newblock Coarse differentiation of quasi-isometries {I}: {S}paces not
  quasi-isometric to {C}ayley graphs.
\newblock {\em Ann. of Math. (2) 176}, 1 (2012), 221--260.

\bibitem{fulton2013algebraic}
{\sc Fulton, W.}
\newblock {\em Algebraic Topology: A First Course}.
\newblock Graduate Texts in Mathematics. Springer New York, 2013.

\bibitem{MR1086648}
{\sc Ghys, E., and de~la Harpe, P.}, Eds.
\newblock {\em Sur les groupes hyperboliques d'apr\`es {M}ikhael {G}romov},
  vol.~83 of {\em Progress in Mathematics}.
\newblock Birkh\"auser Boston, Inc., Boston, MA, 1990.
\newblock Papers from the Swiss Seminar on Hyperbolic Groups held in Bern,
  1988.

\bibitem{gorbatsevich1994lie}
{\sc Gorbatsevich, V., Onishchik, A., Minachin, V., and Vinberg, E.}
\newblock {\em Lie Groups and Lie Algebras III: Structure of Lie Groups and Lie
  Algebras}.
\newblock Encyclopaedia of Mathematical Sciences. Springer Berlin Heidelberg,
  1994.

\bibitem{MR617247}
{\sc Gordon, C.}
\newblock Transitive {R}iemannian isometry groups with nilpotent radicals.
\newblock {\em Ann. Inst. Fourier (Grenoble) 31}, 2 (1981), vi, 193--204.

\bibitem{Gro}
{\sc Gromov, M.}
\newblock Infinite groups as geometric objects.
\newblock In {\em Proceedings of the International Congress of
  Mathematicians\/} (1984), vol.~1.

\bibitem{MR919829}
{\sc Gromov, M.}
\newblock Hyperbolic groups.
\newblock In {\em Essays in group theory}, vol.~8 of {\em Math. Sci. Res. Inst.
  Publ.} Springer, New York, 1987, pp.~75--263.

\bibitem{MR1253544}
{\sc Gromov, M.}
\newblock Asymptotic invariants of infinite groups.
\newblock In {\em Geometric group theory, {V}ol.\ 2 ({S}ussex, 1991)}, vol.~182
  of {\em London Math. Soc. Lecture Note Ser.} Cambridge Univ. Press,
  Cambridge, 1993, pp.~1--295.

\bibitem{MR0369608}
{\sc Guivarc'h, Y.}
\newblock Croissance polynomiale et p\'eriodes des fonctions harmoniques.
\newblock {\em Bull. Soc. Math. France 101\/} (1973), 333--379.

\bibitem{MR0353210}
{\sc Heintze, E.}
\newblock On homogeneous manifolds of negative curvature.
\newblock {\em Math. Ann. 211\/} (1974), 23--34.

\bibitem{MR0316625}
{\sc Jenkins, J.~W.}
\newblock A characterization of growth in locally compact groups.
\newblock {\em Bull. Amer. Math. Soc. 79\/} (1973), 103--106.

\bibitem{MR1921706}
{\sc Kapovich, I., and Benakli, N.}
\newblock Boundaries of hyperbolic groups.
\newblock In {\em Combinatorial and geometric group theory ({N}ew {Y}ork,
  2000/{H}oboken, {NJ}, 2001)}, vol.~296 of {\em Contemp. Math.} Amer. Math.
  Soc., Providence, RI, 2002, pp.~39--93.

\bibitem{KNotes}
{\sc Kapovich, M.}
\newblock Lectures on quasi-isometric rigidity of $3$-manifold groups.
\newblock
  \url{https://www-fourier.ujf-grenoble.fr/sites/ifmaquette.ujf-grenoble.fr/files/Kapovich.pdf},
  2004.

\bibitem{MR1650098}
{\sc Kapovich, M., and Leeb, B.}
\newblock {$3$}-manifold groups and nonpositive curvature.
\newblock {\em Geom. Funct. Anal. 8}, 5 (1998), 841--852.

\bibitem{KLD}
{\sc Kivioja, V., and Le~Donne, E.}
\newblock Isometries of nilpotent metric groups.
\newblock {\em J. \'Ec. polytech. Math. 4\/} (2017), 473--482.

\bibitem{MR1920389}
{\sc Knapp, A.~W.}
\newblock {\em Lie groups beyond an introduction}, second~ed., vol.~140 of {\em
  Progress in Mathematics}.
\newblock Birkh\"auser Boston, Inc., Boston, MA, 2002.

\bibitem{Lee2008}
{\sc Lee, J.~B., and Zhao, X.}
\newblock Nielsen type numbers and homotopy minimal periods for maps on the
  3-nilmanifolds.
\newblock {\em Science in China Series A: Mathematics 51}, 3 (2008), 351.

\bibitem{MR2963596}
{\sc Meeks, III, W.~H., and P\'erez, J.~n.}
\newblock Constant mean curvature surfaces in metric {L}ie groups.
\newblock In {\em Geometric analysis: partial differential equations and
  surfaces}, vol.~570 of {\em Contemp. Math.} Amer. Math. Soc., Providence, RI,
  2012, pp.~25--110.

\bibitem{MILNOR1976293}
{\sc Milnor, J.}
\newblock Curvatures of left invariant metrics on lie groups.
\newblock {\em Advances in Mathematics 21}, 3 (1976), 293 -- 329.

\bibitem{MR1503467}
{\sc Myers, S.~B., and Steenrod, N.~E.}
\newblock The group of isometries of a {R}iemannian manifold.
\newblock {\em Ann. of Math. (2) 40}, 2 (1939), 400--416.

\bibitem{Pajot2017}
{\sc Pajot, H.}
\newblock {\em Gromov Hyperbolic Spaces and Applications to Complex Analysis}.
\newblock Springer International Publishing, Cham, 2017, pp.~55--66.

\bibitem{Pansu}
{\sc Pansu, P.}
\newblock Cohomologie lp , espaces homogenes et pincement.
\newblock \url{https://www.math.u-psud.fr/~pansu/pincement19juil01.pdf}.

\bibitem{MR2390047}
{\sc Pansu, P.}
\newblock Cohomologie {$L^p$} et pincement.
\newblock {\em Comment. Math. Helv. 83}, 2 (2008), 327--357.

\bibitem{MR3469435}
{\sc Petersen, P.}
\newblock {\em Riemannian geometry}, third~ed., vol.~171 of {\em Graduate Texts
  in Mathematics}.
\newblock Springer, Cham, 2016.

\bibitem{rieffel1993groups}
{\sc Rieffel, E.}
\newblock {\em Groups Coarse Quasi-isometric to the Hyperbolic Plane Cross the
  Real Line}.
\newblock University of California, Los Angeles, 1993.

\bibitem{doi:10.1112/blms/15.5.401}
{\sc Scott, P.}
\newblock The geometries of 3-manifolds.
\newblock {\em Bulletin of the London Mathematical Society 15}, 5, 401--487.

\bibitem{thurston1997three}
{\sc Thurston, W., and Levy, S.}
\newblock {\em Three-dimensional Geometry and Topology}.
\newblock No.~Bd. 1 in Luis A.Caffarelli. Princeton University Press, 1997.

\bibitem{MR3180486}
{\sc Xie, X.}
\newblock Large scale geometry of negatively curved
  {$\Bbb{R}^n\rtimes\Bbb{R}$}.
\newblock {\em Geom. Topol. 18}, 2 (2014), 831--872.

\bibitem{MR3034292}
{\sc Young, R.}
\newblock The {D}ehn function of {${\rm SL}(n;\Bbb Z)$}.
\newblock {\em Ann. of Math. (2) 177}, 3 (2013), 969--1027.

\end{thebibliography}

\end{document}